\newtheorem{thm}{Theorem}[section]
\newtheorem{lem}[thm]{Lemma}
\newtheorem{claim}[thm]{Claim}
\newtheorem{cor}[thm]{Corollary}
\newcommand{\cS}{\mathcal{S}}
\newcommand{\cF}{\mathcal{F}}
\begin{document}
\title{On the WalkerMaker--WalkerBreaker games}
\author{Jovana Forcan\thanks{Department of Mathematics and Informatics, Faculty of Sciences, University of Novi Sad, Serbia. 
\newline
Department of Mathematics, Informatics and Physics, Faculty of Philosophy, University of East Sarajevo, Bosnia and Herzegovina. 
Email: dmi.jovana.jankovic@student.pmf.uns.ac.rs}
 \quad Mirjana Mikala\v{c}ki\thanks{Department of Mathematics and Informatics, Faculty of Sciences, University of Novi Sad, Serbia.
Email: mirjana.mikalacki@dmi.uns.ac.rs.}}

\maketitle

\begin{abstract}
We study the unbiased WalkerMaker--WalkerBreaker games on the edge set of the complete graph on $n$ vertices, $K_n$, a variant of well-known Maker--Breaker positional games, where both players have the restriction on the way of playing. Namely, each player has to choose her/his edges according to a walk. Here, we focus on two standard graph games -- the Connectivity game and the Hamilton cycle game and show how quickly WalkerMaker can win both games. 
\end{abstract}

\maketitle
\section{Introduction}
In this paper we study a variant of the well-known Maker--Breaker  games. Let $X$ be a finite set and $\cF$ a family of subsets of $X$. Given two positive integers $a$ and $b$, in the $(a:b)$ Maker--Breaker positional game $(X,\cF)$, two players, Maker and Breaker, take turns in claiming $a$, respectively $b$, elements of $X$ until all elements are claimed. The basic setup is that both players claim exactly \textit{one} element per turn, i.e.\ $a=b=1$. These games are called \textit{unbiased}. Maker wins the game if she claims all the elements of some $F\in \cF$ by the end of the game. Breaker wins otherwise. No draw is possible. The set $X$ is often referred to as the \textit{board} and $\cF$ as the family of \textit{winning sets}. Maker--Breaker games have been studied a lot in the last $30$ years and more about this type of positional games and others can be found in the book of Beck~\cite{BeckBook} and in the recent monograph of Hefetz, Krivelevich, Stojakovi\'{c} and Szab\'{o}~\cite{HKSSbook}. 

It is very natural to play Maker--Breaker games on the edges of the given graph $G$, i.e.\ when $X=E(G)$, and the winning sets are some standard graph theoretic structures, such as spanning trees, Hamilton cycles, perfect matchings, triangles etc. In this paper we will focus on the games on the edges of the complete graph on $n$ vertices, where $X=E(K_n)$ and in particular, we are interested in two standard games: the \textit{Connectivity game}, where the winning sets are the edge sets of all spanning trees of $K_n$ and the \textit{Hamilton cycle game}, where the winning sets are the edge sets of all Hamilton cycles of $K_n$. Maker can win the Connectivity game in $n-1$ moves, as showed by Lehman in~\cite{Lehman}. In the Hamilton cycle game Maker needs to make at least $n+1$ moves to win, as showed by Hefetz et al.\ in~\cite{HKSS2}, and Hefetz and Stich in~\cite{HS09} obtained that $n+1$ moves is enough for her to win.

As it turns out, Maker can easily win in most of the standard unbiased graph games on $E(K_n)$, for sufficiently large $n$. In order to give Breaker more power, several approaches were introduced over the years. One of them is to play the \textit{biased} $(1:b)$ games for $b\geq 1$, as suggested by Chv\'{a}tal and Erd\H{o}s in~\cite{CE}. Another one is to reduce the number of winning sets by making the base graph sparser and play on the random board, as proposed by Stojakovi\'{c} and Szab\'{o} in~\cite{SS05}. 

Recently, Espig, Frieze, Krivelevich and Pegden~\cite{EFKP15} introduced the new approach to make up for Maker's advantage in the unbiased games. In the \textit{Walker--Breaker} games, Walker, having the role of Maker, has to claim her edges according to a \textit{walk}, while Breaker has no restrictions on the way he moves. More specifically, Walker can choose any vertex as her first position. 
When she is positioned at vertex $v$ and it is her turn to play, she can only claim an edge incident with $v$ not previously claimed by Breaker. The other endpoint of the claimed edge becomes her new position. 

Due to their recent appearance, little is known about Walker--Breaker games (see~\cite{CT16,EFKP15}) and lots of questions are still open. For Walker, even in an unbiased $(1:1)$ Walker--Breaker game, it is impossible to create a spanning structure. The longest path that she can create in the $(1:1)$ game on $E(K_n)$ has $n-2$ vertices, as shown in~\cite{EFKP15}. In the same paper, the authors asked the following question.

\paragraph{Question~\cite{EFKP15}:} \textit{What happens if Breaker is also a walker?}\\

In this paper we address this question and study the unbiased $(1:1)$ WalkerMaker--WalkerBreaker games (\textit{WMaker--WBreaker} games for brevity), in which each player has to claim her/his edges according to a walk, i.e.\ when a player is at some vertex $v$, she/he can only choose edges incident with $v$ not previously claimed by the opponent. With this restriction, we look at the two standard games -- the Connectivity game and the Hamilton cycle game on $E(K_n)$, for sufficiently large $n$. We show that situation changes when both players are walkers and in that case it is possible for WMaker to create both a spanning tree and a Hamilton cycle. Moreover, WMaker can do it without wasting too many moves. In particular, we obtain the following.

\begin{thm} \label{teorema1}
In the $(1:1)$ WMaker--WBreaker Connectivity game on $E(K_n)$, WMaker has a strategy to win in at most $n+1$ moves. 
\end{thm}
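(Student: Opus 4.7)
I will give a walking strategy for WMaker and bound the number of moves using a greedy-plus-recovery analysis. Starting at an arbitrary vertex $v_1$, at each turn WMaker is at some vertex $v$ with visited set $S$ (initially $\{v_1\}$). She attempts a \emph{productive} move: pick an unclaimed edge $\{v,u\}$ with $u \notin S$ and extend $S$ to $S \cup \{u\}$. Only if no such edge exists does she play a \emph{wasted} move, choosing an unclaimed edge $\{v,w\}$ with $w \in S$, the destination $w$ being chosen carefully as described below. Since $n-1$ productive moves yield $S = V$ and hence a spanning tree inside WMaker's edges, it suffices to prove that the total number of wasted moves is at most $2$.

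The crucial structural input is that Breaker is also a walker. Suppose WMaker is stuck at $v$ with unvisited set $V \setminus S$ of size $n-s$; then Breaker has already claimed all $n-s$ edges from $v$ into $V \setminus S$. Because Breaker cannot reuse an edge, each return of his walk to $v$ consumes at least three edges (one leaving $v$, one between two vertices outside $v$, and one returning to $v$ along a different edge). Combined with the observation that each visit of Breaker to $v$ contributes at most two incident edges, this forces Breaker's walk to have length at least $\tfrac{3}{2}(n-s) - O(1)$. Since Breaker's walk length equals his number of moves, which is at most WMaker's move count so far, a first wasted move cannot occur until $|S| \ge \tfrac{3n}{5} - O(1)$.

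When a wasted move is forced, WMaker chooses $w \in S$ so that some unvisited $u \in V \setminus S$ still has $\{w,u\}$ unclaimed; her subsequent move will then be productive, taking her to $u$. To see that such $w$ exists, assume the contrary: Breaker simultaneously has all edges from $v$ to $V \setminus S$ and, for each candidate wasted destination $w$, all edges $\{w,u\}$ with $u \in V \setminus S$. This forces Breaker's edge subgraph to contain a bipartite configuration between a set of ``center-like'' vertices $\{v\} \cup \{\text{candidates}\}$ and $V \setminus S$ with many odd-degree vertices, exceeding what an Eulerian trail (which a walk's edge set must be) can tolerate. After the waste and the ensuing productive move, WMaker resumes extending. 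A possible second stuck is handled analogously, and one further wasted move plus recovery completes the traversal of $V$, for a total of at most $(n-1)+2 = n+1$ moves.

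The principal obstacle will be the endgame, when only one or two vertices remain unvisited: there the counting inequalities become tight and the walk constraint on Breaker must be used very precisely. Specifically, between WMaker's wasted move and her recovery move, Breaker has exactly one move to react, and this move is constrained to an edge at his \emph{current} walking position. Exploiting this --- Breaker cannot arbitrarily block any edge of $K_n$, only one incident to his specific location --- will be the technical heart of the proof, and is what allows WMaker to secure a productive move immediately after a waste without fear of being blocked again.
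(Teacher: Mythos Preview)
Your argument rests on an incorrect reading of the rules. You assert that ``Breaker cannot reuse an edge'' and from this derive the $\tfrac{3}{2}(n-s)$ lower bound on the length of Breaker's walk and the Eulerian-trail parity obstruction for the recovery step. But in the WMaker--WBreaker game a player \emph{may} traverse an edge she/he has already claimed; the only forbidden edges are those claimed by the opponent. (The paper itself exploits this in the concluding remarks, where WBreaker repeatedly returns to a fixed vertex along a previously claimed edge.) Consequently WBreaker can accumulate $k$ edges at a single vertex $v$ in roughly $2k$ moves by oscillating $v\to u_1\to v\to u_2\to v\to\cdots$, and his edge set need not form a trail at all. This collapses both your quantitative bound on when the first wasted move can occur and your parity/Eulerian argument that a good recovery vertex $w$ exists.

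Beyond this, the proposal is a sketch rather than a proof: you never specify \emph{which} unvisited vertex WMaker walks to in a productive move, yet your endgame analysis implicitly depends on WMaker not having walked into a trap prepared in advance; and the sentence ``A possible second stuck is handled analogously'' is precisely where the difficulty lies. The paper's proof avoids these issues by prescribing a very specific priority rule (strategy~$\mathcal{S}$): WMaker starts at WBreaker's current endpoint and always walks toward vertices of $U$ that WBreaker has touched, preferring endpoints of WBreaker edges lying entirely in $U$. This maintains the invariant (Lemma~\ref{lema1}) that every WBreaker edge meets $V(M)$, which in turn bounds $d_B(w,U)\le 2$ at WMaker's position and $d_B(x)\le 6$ for each newly visited $x$. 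With these invariants, a path on $n-3$ vertices is built with no wasted moves, and a short explicit case analysis handles the last three vertices in at most five further moves. Your greedy-plus-recovery framework, as stated, does not establish any comparable invariant and so cannot control the endgame.
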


\begin{thm} \label{teorema2}
In the $(1:1)$ WMaker--WBreaker Hamilton cycle game on $E(K_n)$, WMaker has a strategy to win in at most $n+6$ moves. 
\end{thm}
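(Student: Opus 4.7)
The plan is to split WMaker's strategy into two phases: Phase~1 builds a Hamilton path by a greedy walk, and Phase~2 closes it into a Hamilton cycle. Since a Hamilton cycle uses exactly $n$ edges, the $n+6$-move budget leaves room for at most $6$ ``wasted'' edges; I would allocate at most $4$ of these to Phase~1 detours and the rest to the closing moves in Phase~2.

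In Phase~1, WMaker picks a starting vertex $v_1$ (say, far from WBreaker's starting position) and iteratively extends her walk into an increasing path $v_1 v_2 \cdots v_k$. From her current endpoint $v_k$, she selects an unvisited vertex $v_{k+1}$ with $v_k v_{k+1}$ still unclaimed, preferring candidates with many unclaimed incident edges to keep future options open. The crucial observation is that the walker restriction on WBreaker sharply limits the concentration of his edges at any single vertex: a WBreaker edge incident to $v$ can only be claimed on a move entering or leaving $v$, so unless he spends many moves shuttling around $v$, he cannot have many edges incident to it. If WMaker does get stuck (all edges from $v_k$ to unvisited vertices are WBreaker's), she makes a \emph{detour}: she claims an edge $v_k v_j$ to a visited vertex $v_j$ chosen so that $v_j$ has an unclaimed edge to some unvisited vertex, and continues from $v_j$. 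A counting argument using the walker bound on WBreaker should show that at most $4$ detours suffice.

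In Phase~2, WMaker is at the endpoint $v_n$ of a Hamilton path $v_1 v_2 \cdots v_n$ and needs to close it. If $v_n v_1$ is unclaimed she takes it immediately. Otherwise, she finds an index $j \in \{2, \ldots, n-2\}$ such that $v_n v_j$, $v_j v_1$ and $v_1 v_{j+1}$ are all unclaimed, and walks $v_n \to v_j \to v_1 \to v_{j+1}$. After these three moves, her graph contains the Hamilton cycle
\[
v_1 - v_2 - \cdots - v_j - v_n - v_{n-1} - \cdots - v_{j+1} - v_1,
\]
using $n-2$ path edges together with the extras $v_n v_j$ and $v_1 v_{j+1}$ (the edge $v_j v_1$ is a throwaway). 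The main obstacle will be quantifying the two bounds: showing that at most $4$ detours are needed in Phase~1, and that a suitable $j$ always exists in Phase~2. Both should reduce to translating WBreaker's edge-counts at particular vertices into the number of times his walk visits those vertices, and then exploiting the walker constraint to show his blocking power at the critical vertices $v_1$ and $v_n$ is limited.
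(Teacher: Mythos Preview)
Your plan has the right high-level shape (build a long path, then close), but there are two concrete gaps, and the greedy rule you propose is the opposite of what makes the argument go through.

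First, your Phase~1 criterion --- preferring unvisited vertices with \emph{many} unclaimed incident edges --- is not the one that yields control. The paper's strategy~$\cS$ does the reverse: WMaker starts at WBreaker's current vertex and always walks toward the unvisited vertex of \emph{highest} WBreaker-degree (in particular, if WBreaker has an edge with both endpoints in $U$, WMaker immediately absorbs one endpoint). The payoff is the invariant of Lemma~2.1: after every round, every WBreaker edge already touches $V(M)$. From this one gets $d_B(w,U)\le 1$ at the end of each round (Corollary~2.2), so WMaker is \emph{never} stuck and builds a path on $n-3$ vertices in exactly $n-4$ moves with \emph{zero} detours. Your ``at most $4$ detours by a counting argument'' is not obviously salvageable under your greedy rule: the walker bound on WBreaker limits his total edge count, but does not by itself prevent him from concentrating edges between your current endpoint and all remaining unvisited vertices at a given moment, and a generic counting bound won't give a constant like~$4$.

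Second, your Phase~2 ignores WBreaker's intervening moves. You fix a single index~$j$ with $v_nv_j$, $v_jv_1$, $v_1v_{j+1}$ all free \emph{now}, then walk three steps; but WBreaker plays between each of your steps and may be sitting at $v_1$ (or move there), killing $v_jv_1$ or $v_1v_{j+1}$ before you reach them. The paper handles this differently: it stops the path early (at $n-3$ vertices), proves via detailed case analysis (Claims~2.7--2.10) that the edges $u_iv_1$ are still free so WMaker can close a cycle of length $n-2$ or $n-1$ within four more rounds, and only then inserts the one or two missing vertices by finding three \emph{consecutive} cycle vertices $w_1,w_2,w_3$ with no WBreaker edges to the remaining unvisited vertices --- the triple gives enough slack that WBreaker cannot block all closing options in his single intervening move. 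You would need an analogous two-options-at-each-step mechanism, not a single pre-chosen~$j$.
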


We also look at WBreaker's possibilities to postpone WMaker's win in the Connectivity game.

\begin{thm}
\label{breaker}
In the $(1:1)$ WMaker--WBreaker Connectivity game on $E(K_n)$, WBreaker, as the second player, has a strategy to postpone WMaker's win by at least $n$ moves. 
\end{thm}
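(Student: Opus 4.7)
The target is a strategy for WBreaker which forces WMaker's trail of edges to be strictly longer than the bare minimum needed to span. The key combinatorial fact I would lean on is that a trail which spans all $n$ vertices has length at least $n-1$, with equality exactly when it is a Hamilton path; every vertex revisit in the trail therefore costs WMaker at least one extra move without adding a new vertex. So the job of WBreaker is to force such revisits, or equivalently to ensure WMaker's walk is \emph{not} a Hamilton path at the moments when it could otherwise complete a spanning tree.

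\textbf{Strategy.} WMaker plays first, producing vertices $v_0, v_1, v_2, \ldots$ along her walk. After her first move, WBreaker starts her own walk at $v_1$ (she is free to choose her starting vertex) and claims an edge $v_1 u$ with $u \neq v_0$ that leads her toward a convenient "blocking" region. On each subsequent turn, WBreaker \emph{shadows} WMaker: she uses her move to claim an edge incident to either WMaker's current vertex $v_k$ or to a vertex one step from $v_k$, always preferring edges that point at vertices WMaker has not yet visited. The design principle is that WBreaker's claim should always reduce the number of unvisited vertices reachable from $v_k$ by a single unclaimed edge.

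\textbf{Analysis.} I would introduce the potential $\Phi_k$ equal to the number of unvisited vertices reachable from $v_k$ via an edge not claimed by either player. WMaker can extend her walk to a new vertex precisely when $\Phi_k \geq 1$; otherwise she is forced to revisit some previously used vertex and her trail grows by a "wasted" edge. The central lemma would show that WBreaker's shadowing strategy drops $\Phi$ by at least one on each of her turns (beyond the natural drop from WMaker's own moves), so that with high frequency one has $\Phi_k = 0$ at key moments during the first $n-1$ of WMaker's moves. Summing these forced revisits over the run of the game, one deduces that WMaker's trail cannot span $K_n$ until after at least the claimed number of additional moves.

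\textbf{Main obstacle.} The delicate point is that WBreaker is herself constrained to walk, so her previously-claimed edges trim her own options: she may not always be able to reach the vertex from which she wants to block. I expect the technical heart of the argument to be a consistency check showing that WBreaker can always execute the shadowing step — either she is already at a useful vertex, or she can reach one in a single move using a suitable "parity" of visits to her preferred region. Handling the boundary cases, in particular early in the game when few edges have been claimed and late in the game when her own neighborhood is nearly exhausted, will likely require a small case analysis of the local configuration of claimed edges at WBreaker's current vertex.
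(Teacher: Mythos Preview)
Your sketch has a genuine gap exactly where you flag the ``main obstacle.'' To force $\Phi_k=0$ you would need WBreaker to have claimed every edge from WMaker's current vertex $v_k$ to the unvisited set $U$; but $|U|\approx n-k$ while WBreaker has made only $k$ moves, and as a walker she can be incident to $v_k$ on at most about half of them. Worse, $\Phi_k$ is attached to the \emph{moving} vertex $v_k$, so ``WBreaker drops $\Phi$ by one each turn'' is not a well-posed decrement: when WMaker steps to $v_{k+1}$ the relevant quantity resets to $\Phi_{k+1}$, and edges WBreaker claimed at $v_k$ become irrelevant. In fact the shadowing you describe only works with bias~$2$ (one move to return to a fixed vertex, one to block), as the paper itself notes in its concluding remarks; with bias~$1$ WBreaker cannot both stay adjacent to WMaker's position and cut an outgoing edge every round. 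So the ``consistency check'' you hope to carry out will fail, not just in boundary cases but systematically.

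The paper's proof is much simpler because only \emph{one} wasted move needs to be forced (to push WMaker from $n-1$ moves to $n$), and it can be engineered entirely at the endgame. WBreaker plays arbitrarily until exactly three vertices $u_1,u_2,u_3$ remain unvisited by WMaker. In the round WMaker first enters this triple, say at $u_1$, WBreaker walks to $u_2$ (this edge is free since $u_2\in U$ has no WMaker edges). In the next round WBreaker claims $u_2u_3$. Now, whichever of $u_2,u_3$ WMaker reaches, the direct edge to the last unvisited vertex is blocked, and she must detour through an already-visited vertex, costing the extra move. No potential function and no global shadowing are needed.
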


The rest of paper is organized as follows. In Section~\ref{mpf}, we prove Theorems~\ref{teorema1} and~\ref{teorema2}. In Section~\ref{brP} we prove Theorem~\ref{breaker}. Finally, in Section~\ref{sec::last} we give some concluding remarks.
\subsection{Notation}

Our graph-theoretic notation is standard and follows that of~\cite{West}. In particular, throughout the paper we use the following. 

Given a graph $G$, $V(G)$ and $E(G)$ denote its sets of vertices, respectively edges, and $v(G) = |V(G)|$ and $e(G) = |E(G)|$. Given two vertices $x,y\in V(G)$ an edge in $G$ is denoted by $xy$. Given a vertex $x\in V(G)$, we use $d_G(x)$ to denote the degree of vertex $x$ in $G$. For a set $A\subseteq V(G)$ and $x\in V(G)\setminus A$, let $d_G(x,A)$ denote the degree of $x$ towards $A$.

Assume that a WMaker--WBreaker game on the edge set of a given graph $G$ is in progress. At any point of the game, let $M$ and $B$ denote the graphs spanned by edges WMaker, respectively WBreaker, claimed so far. 

For some vertex $v$ we say that it is \textit{visited} by a player if he/she has claimed \textit{at least one} edge incident with $v$. A vertex is \textit{isolated/unvisited} if no edge incident to it is claimed. 
We use $U$ to denote the set of vertices that are still unvisited by WMaker, i.e.\ $U=V(G\setminus M)$. The edges in $E(G \setminus(M\cup B))$ are called \textit{free}.

Unless otherwise stated, we assume that WBreaker starts the game, i.e.\  one \textit{round} in the game consists of a move by WBreaker followed by a move of WMaker.

\section{Proofs of Theorems~\ref{teorema1} and~\ref{teorema2}}
\label{mpf}
\noindent First we define the strategy $\cS$, which WMaker will use in one part of both of the games in order to win.
\paragraph{Strategy $\cS$.} For her starting vertex, WMaker chooses the vertex $v_1$, in which WBreaker has finished his first move, and claims an edge $v_1u$ such that $d_B(u)=0$ (ties broken arbitrarily). 
In every other round WMaker checks if there exists an edge $e\in E(B)$, $e = pq$, s.t. $p, q \in U$, and from her current position $w$ claims $wp$, or $wq$, whichever is free. If both $wp$ and $wq$ are free she chooses $wp$ if $d_B(p)>d_B(q)$, and $wq$, if $d_B(q)>d_B(p)$ (ties are broken arbitrarily). If no such edge exists, WMaker 
from her current position $w$ claims a free edge $wu$ such that $u\in U$ and $d_B(u) = \mathrm{max} \{d_B(v): v\in U \}$, ties broken arbitrarily, for as long as $|U|\geq 3$. If all free edges $wu$ are such that $d_B(u)=0$ for all $u\in U$, then WMaker claims an arbitrary free edge $wu$. 
\\ \\
If WMaker plays according to the strategy $\cS$, then the following statements can be proven, which will be used in proving Theorem \ref{teorema1} and Theorem \ref{teorema2}. 
\begin{lem} \label{lema1}
In the $(1:1)$ WMaker--WBreaker game on $E(K_n)$ strategy $\cS$ guarantees WMaker that, as long as $|U|>2$, after each round, every WBreaker's edge is incident with some vertex $v\in V(M)$. 
\end{lem}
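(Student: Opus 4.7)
The plan is to prove Lemma~\ref{lema1} by induction on the number of completed rounds, where each round consists of a WBreaker move followed by a WMaker move. The base case, round~$1$, is immediate: WBreaker's only edge is $v_0v_1$, and by strategy~$\cS$ WMaker starts at $v_1$, so $v_1 \in V(M)$ and the unique WBreaker edge is incident to $V(M)$.

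For the inductive step, we first record a structural feature of strategy~$\cS$ that will be essential: as long as $|U|\ge 3$, every move it prescribes sends WMaker to a vertex currently in $U$. Consequently her walk visits distinct vertices, and her current position $w$ at the start of any round $k+1$ is a vertex that she visited \emph{for the very first time} in round~$k$. Assume now that the invariant holds after round $k$ and let WBreaker play the edge $xy$ at the start of round $k+1$. If either endpoint of $xy$ already belongs to $V(M)$, then $xy$ is incident to $V(M)$, and since $V(M)$ only grows with WMaker's reply, the invariant persists. The only problematic case is $x,y\in U$, in which strategy~$\cS$ instructs WMaker to claim $wx$ or $wy$, whichever is free; such a move brings one of $x,y$ into $V(M)$ and restores the invariant.

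The crux is therefore to show that at least one of $wx,wy$ actually is free. Suppose both lie in $E(B)$; since $x,y\in U$, neither is a WMaker edge. Let $j\le k$ be the round in which WBreaker claimed $wx$ (note $wx\ne xy$ since $y\ne w$). The inductive hypothesis applied at the end of round~$j$ forces an endpoint of $wx$ to lie in $V(M)$ at that moment; since $x$ is still in $U$ after round $k$ it was in $U$ after round $j$ as well, so this endpoint must be $w$. But $w$ first entered $V(M)$ in round~$k$, which yields $j \ge k$ and therefore $j=k$. The same reasoning places the claim of $wy$ in round~$k$, contradicting the fact that WBreaker claims only one edge per round and that $wx\ne wy$. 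The hard part --- and the engine of the whole induction --- is exactly this combination of the inductive invariant with the ``freshly visited'' status of $w$, a property that strategy~$\cS$ deliberately ensures through its insistence on moves into $U$ while $|U|\ge 3$.
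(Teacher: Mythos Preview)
Your proof is correct and follows the same inductive approach as the paper: assume the invariant holds through round~$k$, and derive a contradiction from the hypothesis that both $wx$ and $wy$ already lie in $E(B)$. Your execution is in fact tighter than the paper's --- the paper argues via one particular ordering of WBreaker's earlier moves ($b_1w$, $wb_2$, $b_2b_1$) without justifying that this order is representative, whereas your explicit ``freshly visited'' observation about $w$ handles all cases uniformly and pins both $wx$ and $wy$ to round~$k$.
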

\begin{proof} The proof goes by induction on the number of rounds $k$. In the first round, we know that WMaker for her starting position chooses the vertex $v_1$ in which WBreaker has finished his first move and claims the edge $v_1u$, $u\in U, d_B(u)=0$. So, the statement holds after the first round. Assume that the statement is true after $k \geq 2$ rounds. 
Assume that in round $k+1$ WBreaker claimed edge $b_1b_2$ such that $b_1,b_2 \notin V(M)$. Denote WMaker's current position by $w$ and suppose that WMaker is not able to visit either $b_1$ or $b_2$ in round $k+1$. It follows that $wb_1, wb_2 \in E(B)$. Suppose that WBreaker claimed these edges in the following order: $b_1w, wb_2, b_2b_1$. This means that, in round in which WBreaker claimed edge $b_1w$, WMaker moved to some vertex different from $b_1$ and $w$, and after that round $b_1w$ was WBreaker's edge not incident with $V(M)$, which is a contradiction to the induction hypothesis.    
\end{proof}
\begin{cor} \label{c1}
In the $(1:1)$ WMaker--WBreaker game on $E(K_n)$, as long as $|U|>2$, strategy $\cS$ guarantees that after each round WMaker is positioned at some vertex $w$ such that $d_B(w,U)\leq 1$. 
\end{cor}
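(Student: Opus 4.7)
The plan is to proceed by induction on the round $k$, with Lemma \ref{lema1} supplying the crucial structural input at each step. The base case $k=1$ is immediate: strategy $\cS$ opens by moving WMaker to some vertex $u$ with $d_B(u)=0$, so $d_B(u,U)=0\leq 1$.

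For the inductive step, assume the statement holds after round $k$ (with $|U|>2$), and let $U$, $U^+$ denote WMaker's unvisited set just before and just after round $k+1$, respectively. By Lemma \ref{lema1} applied at the end of round $k$, no WBreaker edge claimed up to that point has both endpoints in $U$. Let $xy$ be WBreaker's new edge in round $k+1$; the analysis splits on its position. If $x,y\in U$, then strategy $\cS$ directs WMaker to visit one of them, and the feasibility argument from the proof of Lemma \ref{lema1} guarantees she can; say she moves to $x$, so $U^+=U\setminus\{x\}$. The edge $xy$ contributes $1$ to $d_B(x,U^+)$ since $y\in U^+$, whereas every older WBreaker edge at $x$ must have had its other endpoint in $V(M)$ (because $x$ lay in $U$ at the end of round $k$), hence outside $U^+$; so $d_B(x,U^+)=1$. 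Otherwise at least one of $x,y$ lies in $V(M)$, which together with Lemma \ref{lema1} implies that no WBreaker edge at all has both endpoints in $U$. In that situation $\cS$ moves WMaker to some $u\in U$, and since $u\in U$, every WBreaker edge at $u$ must have its other endpoint in $V(M)$, hence outside $U^+$, giving $d_B(u,U^+)=0$.

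The main delicate point is simply ensuring that the move prescribed by $\cS$ in the second case is always available---namely, that $w$ has a free edge into $U$. This is the sole place where the hypothesis $|U|>2$ is really used, matching the clause \textit{``for as long as $|U|\geq 3$''} in the definition of $\cS$, and I expect it to be dispatched by a short edge-counting remark.
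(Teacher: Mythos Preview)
Your proof is correct; the deferred ``edge-counting remark'' is indeed short and should be written out: by the inductive hypothesis $d_B(w,U)\le 1$ after round $k$, WBreaker's single move in round $k+1$ raises this by at most one, so $d_B(w,U)\le 2<3\le |U|$ before WMaker moves, and hence a free edge from $w$ into $U$ exists.

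Your route differs from the paper's. You run a forward induction, splitting on whether WBreaker's new edge lies inside $U$ or not, and then compute $d_B$ at WMaker's new position directly from Lemma~\ref{lema1}. The paper instead argues by contradiction: if after round $i$ WMaker sits at $w$ with two WBreaker edges $wu,wu'$ going into $U$, then (since WBreaker claims one edge per round) at least one of them was claimed in some earlier round $j<i$; but strategy $\cS$ always steps into $U$, so WMaker first reached $w$ only in round $i$, meaning that after round $j$ that edge had both endpoints in $U$, contradicting Lemma~\ref{lema1}. Both proofs pivot on the same application of Lemma~\ref{lema1}; yours is more explicit and in Case~2 even yields the sharper conclusion $d_B(u,U^+)=0$, while the paper's contradiction is a couple of lines shorter.
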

\begin{proof}
This already holds after the first round. Suppose that after some round $i>1$ WMaker is at vertex $w$ such that $d_B(w,U)=2$, that is $wu, wu' \in E(B)$, for some $u,u' \in U$ and suppose that WBreaker claimed $wu$ before $wu'$. Assume also that it is again WBreaker's turn and he claims some edge incident with $u' \in U$ in round $i+1$. \\
This contradicts Lemma \ref{lema1}, because when WBreaker claimed $wu$ in round $i-1$, WMaker visited some vertex different from $w$ and $u$, and after that round, $wu$ was WBreaker's edge not incident with $V(M)$. 
It follows that vertex in which WMaker is positioned at the end of each round can have degree $d_B(w,U)\leq 1$.  
\end{proof} 
\begin{cor} \label{cc1}
In the $(1:1)$ WMaker--WBreaker game on $E(K_n)$, the strategy $\cS$ guarantees WMaker that, as long as $|U|\geq 2$, after  WBreaker's move (and before WMaker's move) in some round $i\geq 2$, vertex $w$ in which WMaker finished her previous move, can have degree $d_B(w,U) \leq 2$. If $d_B(w,U)=2$, then WBreaker finished his move in round $i-1$ at vertex $w$.
\end{cor}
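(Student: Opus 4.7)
The plan is to apply Corollary~\ref{c1} at the end of round $i-1$ and observe that, during WBreaker's single move in round $i$, the set $U$ is unchanged (only WMaker's moves evict vertices from $U$), while WBreaker adds at most one edge. The count $d_B(w,U)$ can therefore rise by at most one above the $d_B(w,U)\le 1$ granted by Corollary~\ref{c1}, yielding the bound $d_B(w,U)\le 2$.

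For the characterisation when $d_B(w,U)=2$, I argue as follows. The count was exactly $1$ beforehand, witnessed by some edge $ww'\in E(B)$ with $w'\in U$; WBreaker's round $i$ edge must supply the second unit, so it is of the form $wu$ with $u\in U$. Being a walker, WBreaker finished round $i-1$ either at $w$ or at $u$; I need to rule out $u$. Suppose for contradiction that WBreaker ended round $i-1$ at $u$. His round $i-1$ edge is then some $yu$, and it cannot equal $ww'$: otherwise $y=w$ (as $u\in U\not\ni w$), so WBreaker's round $i$ edge $wu=ww'$ would already lie in $E(B_{i-1})$. Hence $ww'$ was claimed in some earlier round $j\le i-2$, and Lemma~\ref{lema1} at the end of round $j$, together with $w'\in U$ throughout, forces $w\in V(M_j)\subseteq V(M_{i-2})$. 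WMaker's round $i-1$ move is therefore a return $w^*\to w$ from her previous position $w^*$ to an already visited vertex.

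I would then analyse this return via strategy $\cS$: Lemma~\ref{lema1} rules out case~(a) (no WBreaker edge lies entirely in $U_{i-2}$), and $w\notin U_{i-2}$ rules out case~(b); case~(c) must apply, and its hypothesis applied to $u$ and to $w'$ (both in $U_{i-2}$ with positive WBreaker-degree) forces $w^*u,\,w^*w'\in E(B)$. When $w^*\neq y$, both edges lie in $E(B_{i-2})$ and $u\neq w'$ (else $wu=ww'$ would already be claimed), giving $d_B(w^*,U_{i-2})\ge 2$ and contradicting Corollary~\ref{c1} at the end of round $i-2$.

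The main obstacle is the remaining subcase $w^*=y$: there $w^*u=yu$ appears only in round $i-1$ and does not contribute to $d_B(w^*,U_{i-2})$, so to complete the contradiction one must chase WBreaker's walk one round further back, reapplying Lemma~\ref{lema1} and Corollary~\ref{c1} at the end of round $i-3$ and using that WBreaker and WMaker share the vertex $y$ at the end of round $i-2$. A brief separate verification is also needed at the boundary $|U|=2$, since Corollary~\ref{c1} is stated only for $|U|>2$.
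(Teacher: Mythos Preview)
The paper states Corollary~\ref{cc1} without proof, treating it as an immediate consequence of Corollary~\ref{c1}: after round $i-1$ one has $d_B(w,U)\le 1$, and WBreaker's single edge in round $i$ can raise this by at most one. Your argument for the bound $d_B(w,U)\le 2$ is exactly this and is fine. For the boundary $|U|=2$ that you flag, note that the bound is then trivial since $d_B(w,U)\le |U|=2$.

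For the second assertion you set things up correctly through step~6: if WBreaker ended round $i-1$ at $u\in U$ rather than at $w$, then the other witnessing edge $ww'$ was already present at the end of round $i-2$, and Lemma~\ref{lema1} forces $w\in V(M_{i-2})$. But from here the contradiction is immediate and you do not need the case analysis that leads to your ``obstacle''. Strategy $\cS$, in \emph{every} clause, instructs WMaker to move to a vertex of $U$: the first clause moves to an endpoint $p$ or $q$ of a WBreaker edge with $p,q\in U$; the second clause moves to a vertex $u\in U$ of maximum $d_B$; and the final clause (``claims an arbitrary free edge $wu$'') is a sub-case of the second with $u\in U$ still required---this is what makes Corollary~\ref{c2} (a path that grows by one vertex per move) hold. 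Since $|U_{i-2}|\ge 3$, strategy $\cS$ applies and WMaker's round $i-1$ move goes to a vertex in $U_{i-2}$; hence $w\notin V(M_{i-2})$, contradicting step~6 directly.

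Your reading of the last clause of $\cS$ as allowing $u\notin U$ is what creates the apparent need to analyse a ``return'' move, the subsequent case~(c) argument, and the unresolved subcase $w^*=y$. With the intended reading there is no return move to analyse, and the proof closes at step~6.
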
 
\begin{cor} \label{c2}
In the $(1:1)$ WMaker--WBreaker game on $E(K_n)$, WMaker can build a path $P$ of length $n-3$ (with $n-2$ vertices) in $n-3$ moves by playing according to strategy $\cS$. 
\end{cor}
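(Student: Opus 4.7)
The plan is to argue by induction on the move number $i$ that WMaker's first $n-3$ moves under Strategy $\cS$ each take her to a previously unvisited vertex. Since her claimed edges form a walk, visiting a new vertex on every move means her walk is in fact a path; after $n-3$ such moves the resulting path has length $n-3$ on $n-2$ vertices, which is the content of the corollary.

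The base case $i=1$ is immediate from the definition of $\cS$: WMaker starts at $v_1$ (the endpoint where WBreaker finished his opening move) and claims $v_1 u$ for some $u$ with $d_B(u)=0$. Since after WBreaker's first move only $b_1$ and $b_2$ have positive WBreaker-degree, such a $u \in U$ exists whenever $n \geq 3$, and $M$ becomes a single edge on two vertices.

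For the inductive step, suppose that after WMaker's first $i-1$ moves, with $2 \le i \le n-3$, her trajectory is a path on $i$ vertices ending at her current position $w$. Then $|U| = n - i \geq 3$. By Corollary~\ref{cc1}, immediately after WBreaker's move in round $i$ we have $d_B(w, U) \leq 2$. Since $U$ is disjoint from $V(M)$, no edge from $w$ into $U$ lies in $E(M)$, so every such edge is either free or claimed by WBreaker. Hence at least $|U| - 2 \geq 1$ of the edges from $w$ to $U$ are free, and WMaker can move to some new vertex in $U$, extending the path by one vertex.

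The one detail requiring care is checking that the specific move $\cS$ prescribes is actually available. If there is a WBreaker edge $pq$ with $p, q \in U$, then by Lemma~\ref{lema1} it must have been claimed during round $i$ itself; I would then argue that if both $wp$ and $wq$ had been in $E(B)$ prior to round $i$, this would give $d_B(w, U) \geq 2$ at the end of round $i-1$ with both neighbors in $U$, contradicting Corollary~\ref{c1}. Thus at least one of $wp, wq$ is free and the first branch of $\cS$ is executable. In the alternative sub-case, $\cS$ merely asks for any free edge from $w$ into $U$, which exists by the degree bound above. Iterating through $i = n-3$ completes the argument; I expect the sub-case legality check to be the only technically delicate step, and it follows directly from the earlier corollaries.
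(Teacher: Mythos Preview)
Your proof is correct and follows essentially the same route as the paper's: both arguments use the degree bound $d_B(w,U)\le 2$ from Corollary~\ref{cc1} (equivalently $d_B(w,U)\le 1$ after the round from Corollary~\ref{c1}) to show WMaker always has a free edge into $U$ and can therefore extend her path. The paper compresses this to a single sentence treating only the final step $|U|=3$, whereas you spell out the full induction and additionally verify that the \emph{specific} move prescribed by the first branch of $\cS$ (towards one of $p,q$ when $pq\in E(B)$ with $p,q\in U$) is actually available---a detail the paper leaves implicit.
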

\begin{proof} Suppose that WMaker already built a path $P$ of length $n-4$ ($v(P)=n-3$). Let $U= \{u_1, u_2,u_3 \}$. Let vertex $w$ be WMaker's current position. If $wu_i \in E(B)$ for every $i\in\{1,2,3\}$, this means that after WBreaker's move in this round we have $d_B(w,U)=3$ and this contradicts Corollary~\ref{cc1}. 
\end{proof}
\begin{lem} \label{lema2}
In the $(1:1)$ WMaker--WBreaker game on $E(K_n)$, strategy $\cS$ guarantees WMaker that, as long as $|U|>2$, after each round there can be at most 2 vertices from $U$ belonging to $V(B)$.
\end{lem}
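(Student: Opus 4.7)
The plan is induction on the round number $k$, carrying the invariant $|V(B)\cap U|\le 2$ after round $k$ as long as $|U|>2$. The base case $k=1$ is direct: WBreaker claims some edge $v_0v_1$, WMaker then starts at $v_1$ and takes $v_1u$ with $d_B(u)=0$, so $V(M)=\{v_1,u\}$ and $V(B)\cap U=\{v_0\}$, of size one.

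For the inductive step I will write WBreaker's move in round $k{+}1$ as $e_B=b_k y$, with $b_k$ his position after round $k$, and first observe that $V(B)\cap U$ can grow during this move only when $y\in U\setminus V(B)$, in which case it gains exactly the one new vertex $y$. Since WMaker's subsequent move can never enlarge $V(B)\cap U$, the whole step reduces to the scenario where, before round $k{+}1$, $V(B)\cap U=\{u_1,u_2\}$ of size two, WBreaker adds a fresh $y\in U\setminus V(B)$, and one must show that WMaker's reply visits some vertex of $\{u_1,u_2,y\}$. A further split on $b_k$ disposes of one half: if $b_k\in U$, then $b_k\in V(B)\cap U$ already, so $b_k y$ is a $B$-edge with both endpoints in $U$, and by the first clause of strategy $\cS$ WMaker claims either $wb_k$ or $wy$, as required.

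The main obstacle is the opposite sub-case $b_k\in V(M)$, and I expect to resolve it by combining Lemma~\ref{lema1} with Corollary~\ref{cc1}. By Lemma~\ref{lema1} applied at the end of round $k$, no old $B$-edge has both endpoints in $U$, and since the new edge $b_k y$ contains $b_k\in V(M)$ this property persists after WBreaker's move; consequently strategy $\cS$ passes into its maximum-degree clause and WMaker aims for a free edge $wu$ with $d_B(u)$ maximal over $U$. Every vertex of $U\setminus\{u_1,u_2,y\}$ has $d_B=0$ while $u_1,u_2,y$ each satisfy $d_B\ge 1$, so the maximum is attained only inside $\{u_1,u_2,y\}$. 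Corollary~\ref{cc1} then gives $d_B(w,U)\le 2$, where $w$ is WMaker's current position; hence at most two of the three edges $wu_1,wu_2,wy$ lie in $E(B)$, and since none of them lies in $E(M)$ (their far endpoints all belong to $U$), at least one of the three is free. Strategy $\cS$ therefore sends WMaker to a vertex of $\{u_1,u_2,y\}$, which leaves $U$ after her move, restoring $|V(B)\cap U|\le 2$ and closing the induction.
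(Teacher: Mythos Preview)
Your proof is correct and follows the same induction-on-rounds scheme as the paper: show that if a third vertex of $U$ enters $V(B)$ after WBreaker's move, then WMaker's reply under $\cS$ absorbs one of the three into $V(M)$. Your presentation is in fact a bit more explicit than the paper's---you case-split on whether $b_k\in U$ or $b_k\in V(M)$ and invoke Corollary~\ref{cc1} directly to get $d_B(w,U)\le 2$, whereas the paper argues by contradiction and re-derives that bound by tracing WBreaker's walk back two rounds and appealing to Lemma~\ref{lema1}; the one small point you leave implicit in the $b_k\in U$ sub-case is that $wy$ is automatically free (since $y\notin V(B)$ before the move and $b_k\ne w$), which is what makes the first clause of $\cS$ executable.
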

\begin{proof} The proof goes by induction on the number of rounds $k$. After the first round, there is only one vertex from $U$ visited by WBreaker. This is the vertex which WBreaker chose for his starting position. Suppose that after $k$ rounds, where $k>1$, there were at most two vertices from $U$ visited by WBreaker. Assume that WBreaker played his move in round $k+1$ and now it is WMaker's turn to play her move in  round $k+1$. If there are three vertices from $U$ visited by WBreaker, then by induction hypothesis, WBreaker touched one of these vertices in his last move (round $k+1$). Denote these vertices by $u_1,u_2,u_3$ and let $u_3$ be a vertex touched by WBreaker in his last move.
If WMaker is not able to claim any of edges $wu_1,wu_2,wu_3$ from her current position $w$, this means that WBreaker finished his $k^{\mathrm{th}}$ move also in the vertex $w$ and after round $k$ we had $u_1w,wu_2 \in E(B)$. For this, WBreaker needed three moves, which means that in round $k-2$ WBreaker claimed $wu_i$, for some $i\in\{1,2\}$. Since WMaker visited $w$ in round $k$, we get a contradiction to Lemma \ref{lema1} after round $k-2$, and also a contradiction to induction hypothesis after round $k-1$ because there were three vertices from $U$ visited by WBreaker. 
\end{proof}
\begin{lem} \label{lema3}
In the $(1:1)$ WMaker--WBreaker game on $E(K_n)$, strategy $\cS$ guarantees WMaker that for every vertex $x \in U$, $d_B(x)\leq 6$ holds at the moment when WMaker visits it for the first time.
\end{lem}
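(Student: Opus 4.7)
The approach is to track $d_B(x)$ for the fixed vertex $x$ from the start of the game until the round in which WMaker first visits $x$, and to bound the total number of WBreaker moves incident with $x$ that can occur by then. I classify each WBreaker move that raises $d_B(x)$ into three types: (a) an \emph{entry} to $x$ (WBreaker's walk ends at $x$ in that round), (b) an \emph{exit} from $x$ (his walk starts at $x$ in that round), and (c) his very first move out of $x$ in the special case that WBreaker chose $x$ as his starting position. Every time WBreaker visits $x$ in the interior of his walk, it costs two such moves (one entry in round $i$, one exit in round $i+1$); a starting or still-pending visit costs one. Hence, bounding $d_B(x)\leq 6$ reduces to bounding the number of WBreaker visits to $x$ before WMaker catches him.

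Next, I invoke the already-established structural results to constrain WBreaker's options. Lemma \ref{lema1} ensures that, after each round, every WBreaker edge is incident with $V(M)$; in particular, any WBreaker edge at $x$ that survives into the next round must have its other endpoint in $V(M)$. Lemma \ref{lema2} restricts the set of $U$-vertices touched by WBreaker to at most two at any post-round state, so at most one vertex of $U$ besides $x$ can have positive $d_B$ simultaneously with $x$. Corollary \ref{c1} tells us that after each round WMaker's current position $w$ satisfies $d_B(w,U)\leq 1$, so at most one vertex of $U$ is unreachable from $w$ via a free edge; Corollary \ref{cc1} adds that this number rises to at most $2$ immediately after WBreaker's subsequent move, and only if WBreaker has just stepped onto $w$ himself.

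I then analyse, for each round $i$ in which WBreaker's move increases $d_B(x)$ by one, whether WMaker's round-$i$ response under $\cS$ visits $x$. WMaker visits $x$ unless one of the following blockers applies: (i) her position $w$ satisfies $wx\in E(B)$; (ii) a priority edge $pq\in E(B)$ with $\{p,q\}\cap\{x\}=\emptyset$ has just been created by WBreaker, forcing her to $p$ or $q$; or (iii) a priority edge is incident with $x$ but pairs it with $y\in U$ having $d_B(y)\geq d_B(x)$, or, in the absence of priority edges, some other $u\in U$ has $d_B(u)\geq d_B(x)$ and the tie-breaking rule sends WMaker to $u$. By Corollary \ref{c1}, blocker (i) is a one-off obstacle per state of $w$. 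By Lemma \ref{lema2}, at most one competitor $y$ exists in $U$ at a time, so blockers (ii) and (iii) also consume WBreaker moves on $y$ and cannot be sustained indefinitely without WMaker overtaking and visiting $y$ (which in turn frees $x$ to become the unique maximum). Counting the WBreaker moves spent both at $x$ and on maintaining each blocker yields the numerical bound.

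The main obstacle will be the careful bookkeeping of these blockers, especially the interaction between the temporary violation of Lemma \ref{lema2} right after a WBreaker move that creates a priority edge and the tie-breaking rule in $\cS$. I expect the tight adversarial play to allow WMaker to miss at most two WBreaker visits to $x$ before strategy $\cS$ forces her to $x$: each missed visit contributes two edges (entry plus exit), and one additional edge may accrue from the starting configuration or the round of the visit itself, giving the bound $d_B(x)\leq 6$.
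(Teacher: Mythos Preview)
Your approach is genuinely different from the paper's. The paper fixes the round $i$ in which WBreaker first touches $x$ (via some edge $ax$), splits into three cases according to whether $a\in V(M)$ and whether WMaker is currently positioned at $a$, and then traces the game forward round by round, exhibiting in each branch the exact round (no later than $i+10$) by which WMaker reaches $x$ and verifying $d_B(x)\le 6$ at that moment. It is a direct, exhaustive enumeration of adversarial lines.

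Your entry/exit counting scheme is cleaner in spirit, but what you have written is an outline, not a proof. The decisive assertion---that WMaker misses at most two WBreaker visits to $x$ before $\cS$ forces her there---is introduced with ``I expect'' and is never established. More concretely, your blocker analysis only examines rounds in which $d_B(x)$ increases; but those are precisely the rounds in which WBreaker's move is incident with $x$, so your blocker (ii) (a priority edge disjoint from $x$) cannot even arise then. The real difficulty lies in the \emph{intervening} rounds: after WBreaker leaves $x$ he can string together priority edges $y_1y_2,\,y_2y_3,\,y_3y_4,\dots$ inside $U$, each of which forces WMaker (under $\cS$) to chase the new endpoint rather than head to $x$, and only then re-enter $x$. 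Bounding how long such a diversion chain can run before $x$ becomes the target of $\cS$ is exactly where the paper's case analysis spends its effort (Case~2.i.b.2 is the line that actually attains $d_B(x)=6$), and your sketch does not address it. Until that bookkeeping is carried out, the bound $d_B(x)\le 6$ has not been derived.
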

\begin{proof} Assume that WBreaker touched vertex $x$ for the first time in some round $i$ using the edge $ax$. We will show that $d_B(x)\leq 6$ at the moment WMaker visits it.
We analyse the following cases:
\paragraph{Case 1.} WMaker was already positioned at vertex $a$ at the beginning of round $i$, and after WBreaker claimed $ax$, there can be at most 2 additional vertices $u_1,u_2$ from $U$ visited by WBreaker before WMaker's move in round $i$, according to Lemma \ref{lema2}. \\
In our analysis, we will assume that both vertices $u_1,u_2 \in U$ are in $V(B)$.
When only one of vertices $u_1,u_2 \in U$ is in $V(B)$ or none of them belong to $V(B)$, analysis is similar, but much simpler.  
\paragraph{Case 2.} WMaker's current position is at some vertex $w$ and she visits $a$ for the first time in round $i$. Beside the vertex $x \in U$ there can be at most one vertex from $U$ visited by WBreaker after round $i$, according to Lemma \ref{lema2}. Denote this vertex with $u'$. 
In our analysis we will assume that there is such vertex $u' \in U$ which belongs to $V(B)$. Otherwise, analysis is similar, but much simpler. 
\paragraph{Case 3.} WMaker is at some vertex $w \neq a$ at the beginning of round $i$ and $a \in V(M)$. There can be at most 2 additional vertices $u_1,u_2 \in U \cap V(B)$ before WMaker's move in round $i$, according to Lemma \ref{lema2}. \\
In our analysis, we will assume that both vertices $u_1,u_2 \in U$ are in $V(B)$.
When only one of vertices $u_1,u_2 \in U$ is in $V(B)$ the analysis is similar, but much simpler. If none of these two vertices belong to $V(B)$, WMaker moves from $w$ to $x$ in round $i$, which completes the analysis. \\ 

In the following we analyse all three cases separately. 
\paragraph{Case 1.} By Corollary~\ref{cc1} it is not possible that all three edges $ax,au_1,au_2$ are in $E(B)$, and so WMaker can move to some $u_i$, $i\in\{1,2\}$, say $u_1$.\\ 
Then, if WBreaker claims the edge $xb$, for some $b\in U, b\notin \{u_1,u_2\}$, WMaker must move from $u_1$ to $b$ or $x$.
Since $d_B(x)=2$ and $d_B(b)=1$, the strategy $\cS$ will tell her to choose the edge $u_1x$. This is possible for WMaker to claim as  $u_1x, u_1b \notin E(B)$ (otherwise it would contradict Lemma~\ref{lema1} before round $i$).  \\ 
If WBreaker chose edge $xu_1$ in round $i+1$, WMaker moves from $u_1$ to $u_2$, and in round $i+2$ claims $u_2x$.  \\ \\
If WBreaker chose edge $xu_2$ in round $i+1$, then, if $d_B(x)> d_B(u_2)$, following $\cS$, WMaker moves to $x$. Otherwise, she moves to $u_2$ (suppose she moves to $u_2$ even if $d_B(u_2)= d_B(x)$). 
In round $i+2$, WMaker claims $u_2m_1$ for some $m_1\in U$. If in round $i+3$, WMaker is not able to claim $m_1x$, this means that:
\begin{enumerate}
\item[i.] WBreaker returned to $x$ along $u_2x$ in round $i+2$ and in round $i+3$ he claimed $xm_1$. Then, WMaker moves from $m_1$ to some $m_2\in U$ in round $i+3$. So, in round $i+4$, she will be able to claim $m_2x$. In that moment we would have $d_B(x)= 3$ because in this round WBreaker could have either returned to $x$ along $m_1x$ or claimed $m_1v$, for some $v\in V(K_n), v\neq x$ (and $d_B(v)<d_B(x)$). The edge $m_2x$ is free in the moment when WMaker wants to claim it. Otherwise, we will have a contradiction to Lemma \ref{lema1} before round $i+3$. 
\item[ii.] WBreaker claimed edges $u_2y_1$ and $y_1y_2$ for some $y_1,y_2 \in U$, in rounds $i+2$ and $i+3$, respectively. Since $y_1y_2 \in E(B)$ is not incident with $V(M)$, WMaker must visit $y_1$ or $y_2$ from $m_1$ in round $i+3$. Lemma~\ref{lema1} implies that edges $y_1m_1,y_2m_1 \notin E(B)$. Since $d_B(y_1)>d_B(y_2)$, WMaker moves to $y_1$. 
If WBreaker moves to some vertex $ v \notin U$ or to $v=x$, WMaker can claim $y_1x$ (and $d_B(x)\leq 3$). Otherwise, if WBreaker claims $y_2y_3$ for some $y_3 \in U$, in round $i+4$, strategy $\cS$ will tell WMaker to claim $y_1y_3$, because edge $y_2y_3$ is not incident with $V(M)$. 
If WBreaker claims $y_3v'$ for some $v' \neq x$, WMaker visits $x$ along the edge $y_3x$. Otherwise, if WBreaker claims $y_3x$, WMaker claims $y_3m_2$, for some $m_2\in U$. At that point $d_B(x)=3$.
\\
If $d_B(x)=4$ in round $i+6$, when WBreaker claimed $xm_2$, WMaker moves from $m_2$ to $y_2$, and afterwards, in round $i+7$, she moves from $y_2$ to $x$, where $d_B(x)=4$. Since WBreaker finished his move in round $i+6$ at vertex $m_2$, he is not able to prevent WMaker from visiting $x$ in round $i+7$. If WBreaker claimed $xu$, $u\neq m_2 $, in round $i+6$, WMaker can visit $x$ in this round by moving along the edge $m_2x$. 
\end{enumerate}
\paragraph{Case 2.} WMaker is at vertex $a$ at the beginning of round $i+1$. If WBreaker claims $xb$, where $b\in V(M)$, then WMaker moves to $u' \in U$ and in the following round claims $u'x$. This edge is free, otherwise we will have a contradiction to Lemma~\ref{lema1} before round $i$. \\ 
From now on, suppose that WBreaker claimed $xb$, where $b\in U$ in round $i+1$. By $\cS$, WMaker must claim the edge $ab$. Also, Lemma~\ref{lema1} implies that $ab \notin E(B)$.\\ 
Consider the following situations:
\begin{enumerate}
\item[i.] WBreaker claims edges $bc$ and $cx$, $c\in U$, in rounds $i+2$ and $i+3$, respectively. 
\begin{enumerate}
\item[a)] Let $c \neq u'$. Then WMaker claims $bu'$, $u'\in U$ in round $i+2$. In the following round, WMaker is able to claim $u'x$ or $u'c$ (due to Lemma~\ref{lema1} before round $i$), but she will move to vertex $x$, because $3=d_B(x)>d_B(c)=2$.\\ 
In case $b=u'$, WMaker first moves from $b$ to some $m_1\in U $ in round $i+2$, and then visits $x$ along the edge $m_1x$ in round $i+3$.
\item[b)]  If $c = u'$, then WMaker moves from $b$ to some $m_1 \in U$ in round $i+2$. Since $d_B(c) \geq d_B(x)$, after WBreaker claims $cx$ in round $i+3$, suppose that WMaker claims $m_1c$ (even if $d_B(c)=d_B(x)$, as otherwise WMaker claims $m_1x$ and that completes the argument). 
In the next round suppose that WBreaker claims $xy_1$. If $y_1 \in V(M)$, WMaker moves from $c$ to some $y\in U$, in round $i+4$, and then claims $yx$ in round $i+5$, which completes the analysis (at that moment $d_B(x)=4$). \\
If $y_1 \in U$, WMaker moves from $c$ to $y_1$ and then she claims $y_1m_2$ for some $m_2 \in U$ in round $i+5$. 
If WMaker is not able to visit $x$ in round $i+6$, this means one of the following:
\item[b.1)] WBreaker returned to $x$ along the edge $y_1x$ in round $i+5$ and then he claimed $xm_2$ in round $i+6$. So, WMaker needs to move to some $m_3 \in U$ in round $i+6$ and in the following round she is able to visit $x$ by claiming the edge $m_3x$, where $d_B(x)=5$. WBreaker is not able to prevent WMaker from visiting $x$ in round $i+7$ since he finished his previous move at vertex $m_2$.

\item[b.2)] WBreaker claimed $y_1y_2$ and $y_2y_3$, for some $y_2, y_3 \in U$, in rounds $i+5$ and $i+6$, respectively. Since $y_2y_3$ is not incident with $V(M)$, WMaker needs to move from $m_2$ to $y_2$ or $y_3$ in round $i+6$. Since $d_B(y_2)>d_B(y_3)$, she moves to $y_2$, as it is illustrated in Figure~\ref{fig1}.

\begin{figure}[h]

\centering
\includegraphics[width=0.4\textwidth]{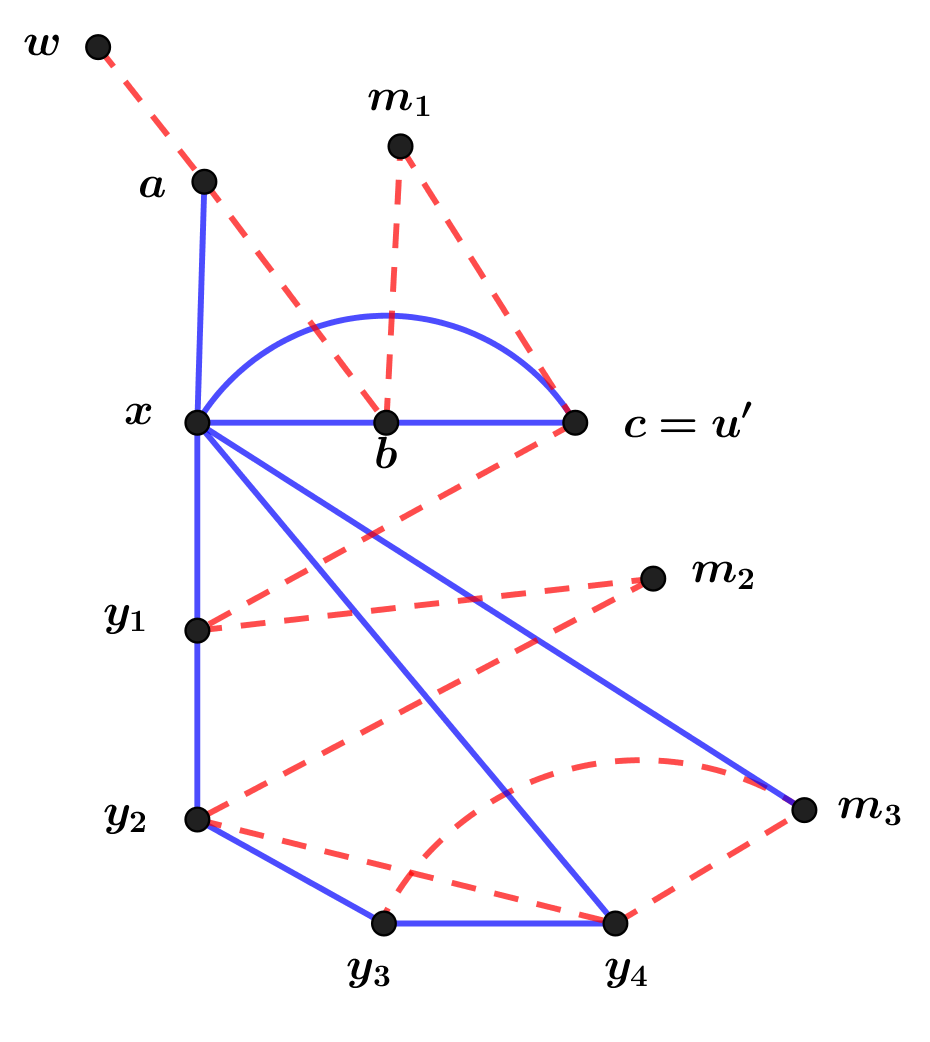}
\caption{WMaker's and WBreaker's moves in Case 2.i.b.2). when $c=u'$ }
 \label{fig1}
\small Red dashed lines show WMaker's moves and solid blue lines represent WBreaker's moves.
\end{figure}

If WMaker is not able to visit $x$ in round $i+7$, this means that there is, again, an edge in $E(B)$ not incident with $V(M)$. That is, WBreaker claimed $y_3y_4$ for some $y_4 \in U$. So, WMaker moves to $y_4$. In round $i+8$, WBreaker can make degree 5 at vertex $x$ by claiming $y_4x$ and in this way he prevents WMaker from visiting $x$. Then, WMaker claims $y_4m_3$ for some $m_3\in U$. If she is not able to move to $x$ in round $i+9$ because WBreaker claimed $xm_3$  (and at this moment $d_B(x)=6$), WMaker moves to $y_3$ and in the following round, $i+10$, she claims $y_3x$. WBreaker is not able to prevent WMaker from visiting $x$, because he finished his move at vertex $m_3$ in round $i+9$. 
\end{enumerate}

\item[ii.] WBreaker claims $bc$ and $cy_1$, for some $y_1\in U$, in rounds $i+2$ and $i+3$, respectively. In this case, WMaker first moves from $b$ to $u \in U$. Then, according to the strategy $\cS$, she needs to move to $c$, since $cy_1$ is not incident with $V(M)$ and $d_B(c)>d_B(y_1)$. If $b\neq u'$ and $c\neq u'$, then $u=u'$. If $b=u'$, WMaker already visited $b$. Also, if $c=u'$, WMaker already visited $c$.  \\
If in the next round WBreaker claims $y_1v$, for some $v \notin U$,  WMaker will be able to claim $cx$. Otherwise, if WBreaker claims edge $y_1y_2$ for some $y_2\in U$, in round $i+4$, WMaker needs to move from $c$ to vertex $y_2$. If afterwards WBreaker claims $y_2v'$ for some $v'\neq x$, then WMaker visits $x$ along the edge $y_2x$. \\
Otherwise, if WBreaker moves from $y_2$ to $x$ and $d_B(x)=3$, then WMaker must move to some $m\in U$. In round $i+6$, it is possible that $d_B(x)=4$ if WBreaker claims either the edge $xm$ or $xy$, for some vertex $y\neq m$. If WBreaker claims $xm$, WMaker claims $my_1$  and in the following round, $i+7$, WMaker claims $y_1x$. Whatever WBreaker plays in round $i+7$, he will not be able to prevent WMaker from visiting $x$. Otherwise, if WBreaker claimed $xy, y\neq m$, in round $i+6$, WMaker visits $x$ in this round. 
\item[iii.] If WBreaker returned to $x$ along the edge $bx$ in round $i+2$, then WMaker moves to some $m_1\in U$ along the edge $bm_1$ (if $b\neq u'$ WMaker moves to $m_1 = u'$ ). 
In round $i+3$, WBreaker can claim $xm_1$ and then $d_B(x)=3$.   In this case, WMaker must move to some $m_2 \in U$ ($m_2m_1 \notin E(B)$, otherwise it is a contradiction to Lemma \ref{lema1} after round $i+1$). Whatever WBreaker plays in round $i+4$, he will not be able to prevent WMaker from moving to vertex $x$ along the edge $m_2x$, because WBreaker finished his move in round $i+3$ at the vertex $m_1$. 
\end{enumerate}
\paragraph{Case 3.} According to Corollary~\ref{c1}, $d_B(w,U)\leq 1$ after round $i-1$. Since WBreaker claimed $ax$ in round $i$, WMaker is able to move from $w$ to some $u_i$, say $u_1$, and to $x$, because $d_B(x)=1$ and $x$ is adjacent only to $a$ in $B$. If $d_B(u_1) > d_B(x)$, she needs to move to $u_1$. 
The edges $xu_1, xu_2, u_1u_2 \notin E(B)$ due to Lemma~\ref{lema1}. Otherwise, she moves to $x$ which completes the analysis.
\\
In round $i+1$, suppose that WBreaker claims $xb$. If $b=u_2$, WMaker moves to $b=u_2$ since $d_B(b) > d_B(x)$.  The further analysis is similar to Case 1 so we skip details.  \\
If $b = u_1$, then WMaker moves to $u_2$ and in the following round, $i+2$, she moves to $x$. \\
If $b \in U$ and  $b \notin \{u_1,u_2\}$, then WMaker needs to move from $u_1$ to $x$ or $b$ because $xb$ is not incident with $V(M)$. Since $d_B(x)=2$ and $d_B(b)=1$, she moves to $x$ (at that moment $d_B(x)=2$).   \\
If $b \in V(M)$, WMaker first moves to $u_2$, if $d_B(u_2)\geq d_B(x)$ (suppose that she moves to $u_2$ even if $d_B(u_2)=d_B(x)$), and then claims $u_2x$. Otherwise, she visits $x$ in round $i+1$ along the edge $u_1x$. In both cases at the moment when WMaker visits $x$, $d_B(x)=2$.
\end{proof}
\subsection{Proof of Theorem \ref{teorema1}}
\begin{proof}
We are going to describe WMaker's strategy and prove that she can follow this strategy. At the beginning of the game, all vertices are isolated in WMaker's graph and $U=V(K_n)$.

\paragraph{Stage 1.} In this stage WMaker builds a path $P$ of length $n-4$ in $n-4$ rounds, by playing according to the strategy $\cS$, which is possible due to Corollary~\ref{c2}. 
\paragraph{Stage 2.} During the course of this stage WMaker visits the three remaining vertices in at most 5 additional moves. At the beginning of this stage suppose that WMaker is at vertex $w$  and $U= \{ u_1, u_2, u_3 \}$. Assume that it is WMaker's turn to play her move in round $n-3$.  Corollary~\ref{cc1} implies that $d_B(w,U)\leq 2$.\\
First, suppose 
that vertex $w$ is such that $d_B(w,U)\leq 1$. Let $wu_1 \in E(B)$. 
Since WMaker visited $w$ in round $n-4$ this means that WBreaker must have claimed $wu_1$ in this round. Otherwise, if WBreaker claimed this edge earlier, then we would have a contradiction to Lemma \ref{lema1} before round $n-4$. If WBreaker finished his move in round $n-4$ at vertex $u_1$, then in his $(n-3)^{\mathrm{rd}}$ move he could claim $u_1u_i$, for some $i \in \{2,3 \}$. Suppose that $u_1u_2 \in E(B)$ and WBreaker is at $u_2$. This edge could not exist earlier, because it would be a contradiction to Lemma \ref{lema1}. Also, the edges $u_2u_3, u_1u_3 \notin E(B)$ because of Lemma \ref{lema1}.
In round $n-3$, WMaker claims the edge $wu_3$ and in the following round she moves to $u_1$. WBreaker is not able to prevent WMaker from claiming $u_3u_1$ because he finished his $(n-3)^{\mathrm{rd}}$ move at the vertex $u_2$. \\
In round $n-3$, when WMaker visited $u_3$ for the first time, we had $d_B(u_3) \leq 6$ (according to Lemma~\ref{lema3}). Since $u_1,u_2$ were still unvisited by WMaker in round $n-3$, we have $d_B(u_1), d_B(u_2) \leq 8$, in round $n-1$.
Since $d_B(u_1,V(P)) + d_B(u_2,V(P)) < v(P) $, there exists a vertex $v\in V(P)$ such that $u_1v$ and $vu_2$ are free. She claims $u_1v$ in round $n-1$. If WMaker is not able to claim $vu_2$ in round $n$, this means that WBreaker finished his move in the previous round at vertex $u_2$, so he was able to prevent WMaker from visiting $u_2$ by claiming $u_2v$ in his $n^{\mathrm{th}}$ move. 
Then WMaker moves to some $v' \in V(P)$ such that edges $vv'$ and $v'u_2$ are free. We need to prove that such vertex $v'$ exists. \\
Let $P' = P \setminus \{v, u_1 \}$. \\
Since $d_B(u_2) \leq 6$ in round $n-3$, we have $d_B(u_2) \leq 8$ before WMaker's move in round $n$. 
So, if $d_B(v,V(P')) + d_B(u_2,V(P')) \geq v(P') = n-3$, 
it follows that $d_B(v,V(P')) \geq n-11$. To make such a large degree at vertex $v$, WBreaker needed at least $2(n-11)-2$ moves because he is also a walker. Since he played exactly $n$ moves, this is not possible. \\
So, in round $n$ WMaker claims $vv'$ and, in the last round, $n+1$, she moves to $u_2$. WBreaker is not able to prevent WMaker from claiming $v'u_2$, because he finished his move in round $n$ at vertex $v$. \\ \\
Let $d_B(w,U) =2$ before WMaker's move in round $n-3$ and let $wu_1, wu_2 \in E(B)$. 
From Corollary~\ref{c1} we know that WBreaker has moved to $u_1$ or $u_2$ from vertex $w$ in his last move, because at the end of round $n-4$ when WMaker came to $w$, we had $d_B(w,U)\leq 1$. Assume that WBreaker is at vertex $u_2$.
Edges $u_1u_2, u_2u_3, u_1u_3 \notin E(B)$. Otherwise, this would mean that WBreaker claimed some of these edges in some round before round $n-3$ and we would have a contradiction to 
Lemma \ref{lema1}. \\
WMaker claims $wu_3$ in round $n-3$.
If in the following round WBreaker moves to $u_3$, WMaker claims $u_3u_1$ and then $u_1u_2$ in round $n-1$. WBreaker is not able to prevent WMaker from claiming $u_1u_2$ because he finished his move in round $n-2$ at vertex $u_3$.  \\
If WBreaker moves to $u_1$ in round $n-2$, WMaker claims $u_3u_2$. In round $n-1$ WMaker identifies a vertex $v\in V(P) $ such that edges $u_2v$ and $vu_1$ are free. 
In the similar way as above we can prove that such vertex $v$ exists. WMaker claims $u_2v$ in round $n-1$ and in round $n$, she claims $vu_1$. 
Since WBreaker must move from $u_1$ in round $n-1$, he can not prevent WMaker from claiming $vu_1$ in the last round $n$.  Otherwise, WMaker can visit the remaining two vertices in two moves. 
\end{proof}
\subsection{The proof of Theorem \ref{teorema2}}
\begin{proof}
First, we describe WMaker's strategy and then prove that she can follow it. At the beginning of the game $U=V(K_n)$.
\paragraph{Stage 1.} In the first $n-4$ rounds WMaker builds a path $P$ of length $n-4$ (with $n-3$ vertices) by playing according to the strategy $\cS$. 
\paragraph{Stage 2.} In the next at most 4 rounds, WMaker closes the cycle of length $n-2$ or $n-1$. \\
Denote by $v_1$ the vertex in which WMaker starts the game. 
In round $n-3$, WMaker from her current position moves to vertex $u_i \in U$, $i\in\{1,2,3\}$ which is not incident with $v_1$ in $B$. 
If WMaker is able to claim the edge $u_iv_1$ in her following move, then she claims it and creates a cycle of length $n-2$.
Otherwise, she moves to $u_j$ along the edge $u_iu_j$, where $i,j\in \{1,2,3 \}$ and $i \neq j$, in round $n-2$. \\
If the edge $u_jv_1$ is free after WBreaker's move in round $n-1$, WMaker claims this edge and closes the cycle of length $n-1$. \\
Otherwise, she finds a vertex $v \in V(P)$ such that edges $u_jv$ and $vv_1$ are free. WMaker first claims the edge $u_jv$ and then in the following round, $n$, she claims $vv_1$ and thus closes the cycle of length $n-1$. 
\paragraph{Stage 3.} Depending on how Stage 2 ended, WMaker completes the Hamilton cycle in at most 8 rounds. We give the details later.\\ \\
We now prove that WMaker can follow her strategy.
\paragraph{Stage 1.} Corollary \ref{c2} implies that WMaker can follow her strategy in Stage 1 and build the path $P$ of length $n-4$, thus visiting $n-3$ vertices in $n-4$ moves. 
\paragraph{Stage 2.} At the beginning of round $n-3$, WMaker is positioned at vertex $x$ and $U= \{u_1,u_2,u_3 \}$. We know that WMaker started the game at the vertex $v_1$ and we consider several cases. 
\paragraph{Case 1.} WBreaker is not positioned at vertex $v_1$ at the beginning of WMaker's move in round $n-3$. 
\paragraph{Case 1.a.} Suppose that $d_B(x,U)=2$ before WMaker's $(n-3)^{\mathrm{rd}}$ move. Let $u_1x, u_2x \in E(B)$. 
From Corollary \ref{cc1} we know that $u_3x \notin E(B)$, because after WBreaker's move (and before WMaker's move) in each round we can have $d_B(x,U)\leq 2$. Also, WBreaker must be positioned at $u_1$ or $u_2$, that is, one of the edges, $u_1x$ or $u_2x$ is the edge which WBreaker claimed in his last move. Otherwise, we will have a contradiction to Corollary~\ref{c1} and Corollary~\ref{cc1}. Suppose that WBreaker finished his $(n-3)^{\mathrm{rd}}$ move at vertex $u_1$. 
\begin{claim} \label{claim1}
For all $i\in\{1,2,3\}$, $v_1u_i \notin E(B)$, before WMaker's $(n-3)^{\mathrm{rd}}$ move.
\end{claim}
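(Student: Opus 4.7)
The plan is to prove this claim by contradiction. Suppose $v_1u_i\in E(B)$ for some $i\in\{1,2,3\}$, and let $j$ be the round in which WBreaker claimed this edge. By the setup of Case~1.a together with Corollary~\ref{cc1} applied to $d_B(x,U)=2$, WBreaker finished round $n-4$ at $x$, and therefore his $(n-3)^{\mathrm{rd}}$ move is precisely $xu_1$ (walking from $x$ to $u_1$). Since $v_1$ and $x$ are the two distinct endpoints of the path $P$ of length $n-4$ built during Stage~1, we have $v_1\neq x$, so the round-$(n-3)$ edge is not $v_1u_i$ and hence $j\leq n-4$.

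Next I would consider which direction WBreaker traverses the edge $v_1u_i$ in round $j$. In either case, at the end of some round $\leq j$ WBreaker sits at $u_i$, so $u_i\in V(B)\cap U$ at that moment and $d_B(u_i)\geq 1$. Lemma~\ref{lema1} then implies that WBreaker's subsequent exit move from $u_i$ must land in $V(M)$; otherwise, he would create an $E(B)$-edge lying entirely inside $U$, violating Lemma~\ref{lema1}. Consequently $d_B(u_i)\geq 2$ after WBreaker leaves $u_i$, which makes $u_i$ a strong candidate for WMaker's target under the max-$d_B$ rule of strategy~$\cS$. Combining this with the constraint that WBreaker also traverses $xu_2$ by round $n-4$ (indeed, by a repeated application of Corollary~\ref{cc1}, $xu_2$ is forced to be the round-$(n-4)$ edge whenever $xu_2$ is WBreaker's only other $U$-incident edge at $x$), I would argue that there is no legal WBreaker walk of length $n-3$ ending at $u_1$, consistent with Lemmas~\ref{lema1} and~\ref{lema2}, that contains $v_1u_i$ and simultaneously denies WMaker a free opportunity to visit $u_i$ at some round between $j$ and $n-4$. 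The conclusion is that WMaker would have visited $u_i$ during Stage~1 via strategy~$\cS$, contradicting $u_i\in U$ at the start of WMaker's $(n-3)^{\mathrm{rd}}$ move.

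The main obstacle is in the last step: strategy~$\cS$ allows arbitrary tie-breaking when several $U$-vertices achieve the maximum $d_B$, so one has to show the contradiction survives adversarial tie-breaking and the possibility that a particular edge $wu_i$ is blocked by WBreaker. The key lever is that every time WBreaker returns to or near $u_i$ in order to keep all such connecting edges blocked, the degree $d_B(u_i)$ grows, pushing $u_i$ toward being the strict maximum in $U$, while each such move simultaneously consumes one of WBreaker's scarce walk-moves — moves he cannot spare if he is also to reach $x$ at the end of round $n-4$ and play $xu_1$ in round $n-3$, all without creating an $E(B)$-edge lying entirely in $U$ (forbidden by Lemma~\ref{lema1}) or placing a third vertex of $U$ in $V(B)$ (forbidden by Lemma~\ref{lema2}).
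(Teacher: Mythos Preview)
Your proposal is a high-level plan rather than a proof, and several of its steps either do not follow or are left unfinished.

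First, your use of Lemma~\ref{lema1} is inaccurate. Lemma~\ref{lema1} does \emph{not} force WBreaker's exit move from $u_i$ to land in $V(M)$. It only guarantees that, by the end of the round, every WBreaker edge has at least one endpoint in $V(M)$; WBreaker may well walk from $u_i$ to another vertex of $U$, and then WMaker visits one of the two endpoints in the same round. So the conclusion ``$d_B(u_i)\geq 2$ after WBreaker leaves $u_i$'' is not established the way you claim, and even if it were, ``being a strong candidate under the max-$d_B$ rule'' does not by itself yield a contradiction, because of ties and possible blocked edges---the very obstacle you later flag.

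Second, the final paragraph is not an argument but a description of what would need to be proved. The paper's proof does exactly the work you defer: it reconstructs WBreaker's walk backwards move by move (round $n-3$: $xu_1$; round $n-4$: $u_2x$; round $n-5$: WBreaker arrives at $u_2$, so if $v_1u_2\in E(B)$ this must be the move $v_1\to u_2$; round $n-6$: WBreaker arrives at $v_1$, touching no new $U$-vertex), and then uses Lemma~\ref{lema2} and Corollary~\ref{cc1} at rounds $n-6,\,n-5,\,n-4$ to show concretely that WMaker, following~$\cS$, would have been forced to visit $u_2$ (or, in the case $d_B(x)\geq 2$, to visit $x$ too early). The cases $v_1u_1\in E(B)$ and $v_1u_3\in E(B)$ require their own (similar) backward traces. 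Your proposal does not carry out any of these traces, and the informal counting of ``scarce walk-moves'' is far too coarse to replace them: WBreaker has $n-3$ moves available, which is ample unless one pins down, as the paper does, exactly which moves are forced in the last half-dozen rounds.

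In short, the overall strategy (contradiction via backward reconstruction plus Lemmas~\ref{lema1}--\ref{lema2}) matches the paper's, but the execution is missing; what you call the ``main obstacle'' is precisely the content of the proof.
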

\begin{proof}
Suppose that at least one of these three edges is in WBreaker's graph. \\
Based on the above consideration, we know that WBreaker claimed the edge $u_2x$ in his $(n-4)^{\mathrm{th}}$ move, and the edge $xu_1$ in his $(n-3)^{\mathrm{rd}}$ move. In round $n-5$ he moved to $u_2$ from some vertex. Assume that this vertex is $v_1$. So, $u_2v_1\in E(B)$. This means that in round $n-6$ he moved from some vertex of his path (because he is also a walker) to vertex $v_1$. Since in this round he did not visit any new vertex from $U$, Lemma \ref{lema2} implies that there can be at most 2 vertices from $U$ visited by WBreaker. Denote them by $u$ and $u'$. \\
If WMaker cannot visit any of these vertices in her $(n-6)^{\mathrm{th}}$ move from her current position, denoted by $y$, this means that $d_B(y,U)=2$ which implies that WBreaker claimed one of the edges $yu$ or $yu'$ in round $n-6$ (Corollary \ref{cc1}), and the other one in round $n-7$. A contradiction to assumption.  \\
Thus, WMaker is able to visit, say $u$ from $y$ in round $n-6$ and in the following round vertex $u'$ from $u$. The edge $uu' \notin E(B)$, as Lemma~\ref{lema1} holds before round $n-6$. 
If $d_B(x)=1$, in round $n-4$ strategy $\cS$ tells WMaker to claim $u'u_2$ ($u'u_2 \notin E(B)$ as it would contradict Lemma~\ref{lema1} after round $n-6$), because $d_B(u_2)=2>d_B(x)$. A contradiction, because WMaker visited $x$ in round $n-4$. \\
If $d_B(x)\geq 2$ before WMaker's move in round $n-4$, this means that there exists at least one vertex $a$ such that $xa, xu_2 \in E(B)$ and $a$ is visited by WMaker. It follows that WBreaker claimed $ax$ in some round before round $n-6$. This implies that $x=u$ or $x=u'$ at the beginning of WMaker's move in round $n-6$. Therefore WMaker visited $x$ in round $n-6$ or $n-5$. A contradiction. \\
If $u_1v_1 \in E(B)$, then it follows that WBreaker could claim it, at  latest, in round $n-6$. By similar consideration as above, we can conclude that this is also not possible. \\
If $v_1u_3 \in E(B)$, then we can consider the following cases:

\begin{enumerate}
\item WBreaker in some round $k\leq n-7$ moved from $u_3$ to $v_1$, where $d_B(u_3)\geq 2$, since WBreaker is a walker. After his move, there can be at most one more vertex $u'\in U$ visited by WBreaker (Lemma \ref{lema2}). Then, WMaker from her current position, say $y$, can visit $u'$ or $u_3$, according to Corollary \ref{cc1}.
Suppose that WMaker moved to $u'$. 
In round $k+1$, strategy $\cS$ will tell WMaker to move to $u_3$. (WBreaker could move from $v_1$ to some $u''\in U$, but $d_B(u_3)>d_B(u'')$). A contradiction.   
\item WBreaker in some round $k\leq n-8$ moved from his current position $p$ to vertex $v_1$ and then in round $k+1$ he claimed $v_1u_3$. After WBreaker's move in round $k$ there can be at most two vertices from $U$, say $u,u'$, visited by WBreaker (Lemma \ref{lema2}). From Corollary \ref{cc1} and Lemma \ref{lema1} it follows that WMaker can visit both vertices in rounds $k$ and $k+1$. Suppose that she first moves to $u$ and then claims the edge $uu'$. If she is not able to visit $u_3$ in round $k+2$, this means that WBreaker claimed $u_3u'$ in round $k+2$. Then WMaker claims $u'u''$, for some $u''\in U$ and then $u''u_3$, as after WBreaker's move in round $k+4$ there is no vertex from $U$ which has larger degree than $u_3$ and there is no edge whose both endpoints are in $U$. A contradiction.
\end{enumerate}
\end{proof}
\noindent Claim ~\ref{claim1} gives that in her $(n-3)^{\mathrm{rd}}$ move, WMaker can claim the edge $xu_3$ and in the following round, $n-2$, she can close a cycle of length $n-2$, by claiming the edge $u_3v_1$. 
\paragraph{Case 1.b.} Suppose that $d_B(x,U)=1$ before WMaker's move in round $n-3$. Let $xu_1 \in E(B)$. Since WMaker visited $x$ in round $n-4$, this means that WBreaker claimed $xu_1$ in round $n-4$, as otherwise this would contradict Lemma \ref{lema1} before round $n-4$. Assume that WBreaker finished his $(n-4)^{\mathrm{th}}$ move in $u_1$. Otherwise, if he finished $(n-4)^{\mathrm{th}}$ move in $x$, after his move in round $n-3$ we can have Case 1.a. which we already considered, or WBreaker moved to some $v$ on WMaker's path $v\neq v_1$. 
\begin{claim} \label{claim2}
After round $n-4$, $u_iv_1 \notin E(B)$ for each $i \in \{1,2,3 \}$. 
\end{claim}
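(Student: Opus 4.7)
The plan is to mimic the proof of Claim~\ref{claim1}. Suppose, for contradiction, that $u_iv_1 \in E(B)$ at the end of round $n-4$ for some $i \in \{1,2,3\}$, and let $k$ be the round in which WBreaker claimed $u_iv_1$. Since WBreaker claimed $xu_1$ in round $n-4$ (arriving at $u_1$ from $x$), we have $k \leq n-5$. Being a walker, WBreaker either moved from $u_i$ to $v_1$ or from $v_1$ to $u_i$ in round $k$; in particular one of $u_i$ or $v_1$ is WBreaker's position just before round $k$ and the other just after. Recall that $v_1 \in V(M)$ from round~$1$ by the definition of strategy $\cS$.

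The core step is to apply Lemma~\ref{lema2} at the round just after WBreaker first reaches $u_i$ along $u_iv_1$. At that moment at most two vertices of the then-current $U$ are in $V(B)$, and $u_i$ is one of them, so at most one additional vertex $u'$ of $U$ can be in $V(B)$. Then Corollary~\ref{cc1} and Lemma~\ref{lema1}, combined with the degree-prioritization clause of $\cS$, force WMaker to visit $u_i$ within the next one or two rounds, since $u_i$ has the larger $d_B$ among vertices of $U$ reachable from WMaker's position. This contradicts the assumption that $u_i \in U$ at the end of round $n-4$. The sub-case split here is exactly parallel to items~(1) and~(2) in the proof of Claim~\ref{claim1}.

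The case $i=1$ needs an extra observation because WBreaker revisits $u_1$ in round $n-4$: if $u_1v_1$ was claimed in round $k \leq n-5$, then WBreaker's walk must leave $u_1$ after round $k$ and return to $x$ by the end of round $n-5$. I would apply Lemma~\ref{lema2} at an intermediate round of this walk to obtain the same type of contradiction, using that the extra detour to $v_1$ and back forces $\cS$ to route WMaker towards $u_1$ earlier than round $n-4$.

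The main obstacle, as in Claim~\ref{claim1}, is the careful case-by-case tracking of WBreaker's walk between round $k$ and round $n-4$, ensuring that every new vertex of $U$ that WBreaker touches triggers a WMaker response (via $\cS$) that eventually reaches $u_i$. The key invariant to maintain throughout is Lemma~\ref{lema2}: if it is preserved at every intermediate round, then no WBreaker walk can keep all three vertices $u_1, u_2, u_3$ out of $V(M)$ through round $n-4$ while also accommodating the previously claimed edge $u_iv_1$.
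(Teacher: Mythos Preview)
Your plan is sound and indeed parallels the paper's own treatment; the paper explicitly states that the proof of Claim~\ref{claim2} ``is very similar to the proof of Claim~\ref{claim1}.''  However, your outline has one quantitative overstatement and differs from the paper's execution in a way worth noting.

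First, the overstatement: you write that Corollary~\ref{cc1}, Lemma~\ref{lema1}, and the degree-prioritization of $\cS$ ``force WMaker to visit $u_i$ within the next one or two rounds.''  This is too optimistic.  Already in item~(2) of Claim~\ref{claim1}, WBreaker can delay WMaker by repeatedly touching fresh vertices of $U$, and WMaker only reaches $u_3$ after up to four rounds.  Your outline needs to acknowledge this ``chase'' (WBreaker creates a new edge with both endpoints in $U$, WMaker absorbs one endpoint, repeat) rather than assert a fixed two-round bound.

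Second, the execution: the paper's appendix proof does not start abstractly from the round $k$ in which $u_iv_1$ was claimed.  Instead it exploits the concrete context of Case~1.b --- that WBreaker's moves in rounds $n{-}5$ and $n{-}4$ are already pinned down as $p\to x$ and $x\to u_1$ --- and traces WBreaker's walk backward.  It argues that $p\notin\{u_2,u_3\}$ (else Lemma~\ref{lema1} fails), so the latest WBreaker could have reached $v_1$ via some $u_i$ is along a path $v_1\to u_2\to p$ in rounds $n{-}7,\,n{-}6$, which forces a return to $v_1$ in round $n{-}8$.  Lemma~\ref{lema2} is then applied at round $n{-}8$, and a short forward argument (rounds $n{-}8$ through $n{-}5$) shows WMaker would already have visited $u_2$.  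This backward reconstruction buys concreteness: you only have to check one explicit walk segment rather than the general bifurcation of Claim~\ref{claim1}'s items~(1) and~(2).  Your forward-from-$k$ approach is equally valid but requires you to actually carry out the case split you identify as ``the main obstacle''; as written, that part of your proposal remains a plan rather than a proof.
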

The proof of this Claim is very similar to the proof of Claim~\ref{claim1}, therefore we give it in the appendix. 

If WBreaker, in his $(n-3)^{\mathrm{rd}}$ move, claims $u_1u_2$ (or $u_1u_3$), then WMaker moves from $x$ to $u_3$ (respectively to $u_2$). In the following round, $n-2$, she claims $u_3v_1$ (or $u_2v_1$), which is free according to Claim~\ref{claim2}, and closes the cycle of length $n-2$.\\ 
If WBreaker, in his $(n-3)^{\mathrm{rd}}$ move, claims $u_1v_1$, then WMaker moves to $u_2$ (or $u_3$). Suppose WMaker claimed $xu_2$. In the following round WBreaker can claim $v_1u_2$. So, WMaker is not able to close the cycle of length $n-2$ in round $n-2$. In that case, she moves to $u_3$ along $u_2u_3$ ($u_2u_3 \notin E(B)$ as this contradicts Lemma \ref{lema1} before round $n-3$). In round $n-1$, WMaker moves from $u_3$ to $v_1$ and makes a cycle of length $n-1$. WBreaker cannot block her because he finished his $(n-3)^{\mathrm{rd}}$ move at vertex $u_2$. The analysis is the same if WBreaker claimed $xu_3$ in round $n-2$.
\paragraph{Case 1.c.} Suppose that $d_B(x,U)=0$ before WMaker's move in round $n-3$. 
\begin{claim} \label{claim3}
Before WMaker's $(n-3)^{\mathrm{rd}}$ move, there can be at most two vertices from $U$ adjacent to $v_1$ in $B$.
\end{claim}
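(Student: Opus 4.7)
I would argue by contradiction, following the pattern of the proofs of Claims~\ref{claim1} and~\ref{claim2}: assume all three edges $v_1u_1, v_1u_2, v_1u_3$ belong to $E(B)$ just before WMaker's $(n-3)^{\mathrm{rd}}$ move, so all three of $u_1, u_2, u_3 \in V(B)$ at that moment.

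Pinning down WBreaker's last two moves: by Lemma~\ref{lema2}, at most two of $u_1, u_2, u_3$ lie in $V(B)$ after round $n-4$ (when $|U|=3$), so WBreaker's $(n-3)^{\mathrm{rd}}$ move must introduce the third, say $u_3$, into $V(B)$; this combined with $v_1u_3 \in E(B)$ and $u_3 \notin V(B)$ beforehand forces the move to be $v_1u_3$ itself, oriented $v_1 \to u_3$. Consequently WBreaker was at $v_1$ at the end of round $n-4$, so his $(n-4)^{\mathrm{th}}$ move claims some edge $yv_1$. Splitting on $y$ yields two branches. In \textbf{(I)} $y \in \{u_1, u_2\}$, WLOG $y = u_1$, so that move is $v_1u_1$ itself and $v_1u_2$ was claimed in some round $r_* \le n-5$. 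In \textbf{(II)} $y \notin \{u_1, u_2, u_3\}$, both $v_1u_1$ and $v_1u_2$ were claimed in rounds $\le n-5$, and either both come from rounds $1$ and $2$ (so $\{b_0, b_2\} = \{u_1, u_2\}$ with $b_0$ WBreaker's starting vertex and $b_2$ his position at end of round $2$) or WBreaker visited $v_1$ at some additional intermediate round. For each branch I would reconstruct WBreaker's walk backward round by round, applying Lemma~\ref{lema1} to exclude walk configurations leaving a $B$-edge with both endpoints in $U$ after a round in which WMaker was elsewhere, and Lemma~\ref{lema2} to cap the $U$-vertices in $V(B)$ at each intermediate round.

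The decisive step invokes strategy $\cS$ at the first round in which both $u_1, u_2 \in V(B)$: from that round on $d_B(u_1), d_B(u_2) \ge 1$; Corollary~\ref{c1} guarantees that at least two of the edges from WMaker's current vertex $w$ to $\{u_1, u_2, u_3\}$ are free; and the priority rules of $\cS$ (either responding to a $B$-edge with both endpoints in $U$, or targeting a $U$-vertex of maximum $d_B$) would direct WMaker to visit one of $u_1, u_2$ strictly before round $n-3$, contradicting $u_1, u_2 \in U$ at round $n-3$. The simplest sub-case, branch \textbf{(II)} with $\{b_0, b_2\} = \{u_1, u_2\}$, resolves immediately via WMaker's round-$2$ move: at that point $d_B(u_1) = d_B(u_2) = 1$ are the only nonzero $d_B$-values on $U$, and the max-$d_B$ rule of $\cS$ forces WMaker to claim $wu_1$ or $wu_2$. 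The main obstacle lies in the remaining sub-cases, where WBreaker might try to sustain decoy $U$-vertices of strictly higher $d_B$ to divert WMaker; the resolution, as in Claim~\ref{claim1}, is a move-count showing that with three of WBreaker's $n-3$ moves already committed to edges at $v_1$, his walker constraint prevents him from sustaining the required decoys across all relevant rounds.
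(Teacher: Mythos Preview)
Your overall plan---assume all three edges $v_1u_i$ lie in $E(B)$, reconstruct WBreaker's walk backward through $v_1$, and derive a contradiction from the priority rules of~$\cS$---is exactly the paper's route, and your use of Lemma~\ref{lema2} to force WBreaker's $(n-3)^{\mathrm{rd}}$ move to be $v_1\!\to\!u_3$ is in fact more careful than the paper, which simply fixes the order $u_1v_1,\,v_1u_2,\,u_2v_1,\,v_1u_3$ in rounds $n-6$ through $n-3$ without your branching.

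The one place your sketch diverges is the finishing mechanism. Your closing paragraph proposes a global move-count (``three of WBreaker's $n-3$ moves already committed to edges at $v_1$, so he cannot sustain decoys''). That is not how the paper closes the argument, and a budget bound of this kind does not by itself rule out a decoy $U$-vertex of higher $d_B$ at the critical moment. What the paper actually exploits is local: because WBreaker is a walker, claiming the three edges at $v_1$ forces at least one round (in the paper's ordering, round $n-6$, the move $u_1\!\to\!v_1$) in which WBreaker touches \emph{no new} vertex of $U$. At the end of that round Lemma~\ref{lema2} caps $|U\cap V(B)|$ at~$2$, one of which is $u_1$; WMaker visits the other one (or $u_1$) in that round. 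In the very next round WBreaker plays $v_1u_2$, and now the only $U$-vertices in $V(B)$ are $u_1,u_2$; Lemma~\ref{lema1} guarantees both edges from WMaker's position to $u_1,u_2$ are free, so $\cS$ sends WMaker to one of them---contradiction. In your branches~(I) and~(II) the same idea works: locate the round in which WBreaker transits through $v_1$ without hitting a new $U$-vertex, and apply Lemma~\ref{lema2} there rather than appealing to a global move budget.
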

\begin{proof}
If $u_1v_1,u_2v_1, u_3v_1 \in E(B)$, then WBreaker spent $4$ moves to do this. Assume that he claimed edges in this order: $u_1v_1,v_1u_2, u_2v_1$ and $v_1u_3$. Thus, he moved from $u_1$ to $v_1$ in round $n-6$  which means that after his move in this round, there could be at least one more vertex $u$ from $U$ touched by WBreaker (Lemma \ref{lema2}), since he did not visited any new vertex from $U$ in this round. WMaker can visit $u$ or $u_1$ in round $n-6$ from her current position, say vertex $y$. Otherwise, we would have $d_B(y,U)=2$ and $yu_1,yu \in E(B)$ (Corollary \ref{cc1}) and this would mean that WBreaker moved from $y$ in round $n-6$, which is not the case. Assume that WMaker visited $u$ in this round. After WBreaker claims $v_1u_2$, in round $n-5$, WMaker would be able to move from $u$ to $u_1$ or $u_2$ in this round (both edges $uu_1$ and $uu_2$ are free in the moment WMaker wants to claim them, due to Lemma~\ref{lema1}). A contradiction. 
\end{proof}
Claim~\ref{claim3} implies that there can be at most two vertices from $U$, adjacent to $v_1$ in $B$. If there are exactly 2 vertices, say $u_1,u_2$ adjacent to $v_1$, then in the similar way as in the proof of Claim~\ref{claim3}, we can show that WBreaker finished his last move, in round $n-3$, in vertex $u_1$ or $u_2$. \\ \\
In her $(n-3)^{\mathrm{rd}}$ move, WMaker claims $xu_3$. Whatever WBreaker plays in round $n-2$, he will not be able to prevent WMaker from claiming $u_3v_1$ in this round. 
Thus, WMaker closes the cycle of length $n-2$ in round $n-2$. 
\paragraph{Case 2.} Suppose that WBreaker is at vertex $v_1$ after his move in round $n-3$. 
\begin{claim} \label{claim4}
It is not possible that at the same time $u_iv_1, u_jv_1 \in E(B)$ for some $i,j \in \{1,2,3\}$, $i\neq j$ and that WBreaker is at $v_1$ after his move in round $n-3$. 
\end{claim}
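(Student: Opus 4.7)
The approach I would take is proof by contradiction. Assume $u_1v_1, u_2v_1 \in E(B)$ and WBreaker is at $v_1$ after his $(n-3)^{\mathrm{rd}}$ move. First, I would observe that by strategy $\cS$, WMaker's starting vertex $v_1$ equals the endpoint of WBreaker's first move, so WBreaker's walk $q_0, q_1, \dots, q_{n-3}$ satisfies $q_0 \ne v_1$, $q_1 = v_1$, and $q_{n-3} = v_1$. A parity count of $v_1$-incident edges in this walk then yields $2m+1$ such edges, where $m \geq 1$ is the number of internal visits of $v_1$; in particular at least three edges $u_1v_1, u_2v_1, yv_1$ lie in $E(B)$. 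Next I would show that $y \in V(P)$: a case analysis on WBreaker's $(n-3)^{\mathrm{rd}}$ move together with Lemma~\ref{lema2} establishes $u_1, u_2 \in V(B)$ at the end of round $n-4$, forcing $u_3 \notin V(B)$ throughout rounds $1$ to $n-3$, so $y \ne u_3$.

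I would then address the simplest case $m = 1$, in which the three $v_1$-edges are $q_0 v_1$ (round 1), $v_1 q_2$ (round 2), and $q_{n-4} v_1$ (round $n-3$). Since $\{u_1, u_2\} \subseteq \{q_0, q_2, q_{n-4}\}$, at least one of $u_1 v_1, u_2 v_1$ is claimed in round 1 or 2, meaning WBreaker first touches some $u_i$ by round 2. After WBreaker's round 2 move one has $d_B(u_i) \geq 1$, and by Lemma~\ref{lema1} no edge of $E(B)$ has both endpoints in $U$; strategy $\cS$ then forces WMaker to claim in round 2 an edge $a_1 u_k$ for some $u_k \in U$ of maximum $d_B$. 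Since the vertices in $U$ of maximum $d_B$ are among $\{u_1, u_2, u_3\}$ (possibly tied with a single newly-touched $V(P)\cap U$ vertex), a short case split shows WMaker visits a vertex of $\{u_1, u_2, u_3\}$ within a constant number of rounds, contradicting $u_k \in U$ at the end of round $n-4$.

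Finally I would handle $m \geq 2$. If $q_0$ or $q_2$ lies in $\{u_1, u_2\}$, the argument from the previous paragraph still applies, so I would focus on the subcase $q_0, q_2 \in V(P)$. Here $u_1 v_1$ and $u_2 v_1$ are claimed among the later $v_1$-incident rounds, i.e.\ in rounds $r_2, r_2+1$ associated with some internal visit $q_{r_2} = v_1$ with $r_2 \geq 3$, or in round $n-3$. Lemma~\ref{lema3} guarantees that WMaker visits each $u_i$ within a constant number of rounds of WBreaker's first touch; for $u_i \in U$ at the end of round $n-4$ to persist, the first touch $\tau_i$ must lie very close to $n-3$, which in turn pushes $r_2$ close to $n-3$. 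Tracking WBreaker's walk in this narrow final window together with WMaker's $\cS$-responses, in the spirit of the proof of Claim~\ref{claim3}, eventually contradicts $u_1, u_2 \in U$ at the end of round $n-4$. The main obstacle is precisely this last subcase, which demands a careful case split on the permissible orderings of WBreaker's $v_1$-incident edges and on his intermediate walk steps, mirroring the intricate casework in the proofs of Claims~\ref{claim1} and~\ref{claim3}.
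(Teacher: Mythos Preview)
Your approach diverges from the paper's and contains genuine gaps.

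The paper's proof is entirely \emph{local}: it argues that for WBreaker to hold both $u_iv_1,u_jv_1$ and be positioned at $v_1$ after round $n-3$, his walk in rounds $n{-}6,\dots,n{-}3$ must have been $p\to u_i\to v_1\to u_j\to v_1$. It then splits on the status of the vertex $p$ (whether $p$ is WMaker's current position, whether $p\in V(M)$, etc.) and, using Lemma~\ref{lema1}, Lemma~\ref{lema2} and Corollary~\ref{cc1}, shows that in each case WMaker would already have visited $u_i$ or $u_j$ by round $n-4$, contradicting $u_i,u_j\in U$.

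Your argument instead tries to exploit the global fact $q_1=v_1$, and this is where the problems start. First, your parity count yields $2m+1$ edge-\emph{traversals} incident to $v_1$, not $2m+1$ distinct edges: WBreaker is a walker and may re-traverse his own edges, so the existence of a third edge $yv_1$ does not follow. In fact the paper's own scenario $u_i\to v_1\to u_j\to v_1$ uses three traversals but only two distinct $v_1$-edges.

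Second, and more seriously, you misread Lemma~\ref{lema3}. You write that it ``guarantees that WMaker visits each $u_i$ within a constant number of rounds of WBreaker's first touch''. It does not: it only bounds $d_B(x)\le 6$ at the moment WMaker eventually visits $x$, with no bound on \emph{when} that happens. If WBreaker touches $u_1$ once in round~1 (so $d_B(u_1)=1$ forever) and then walks elsewhere, strategy~$\cS$ may keep breaking ties against $u_1$ round after round; nothing forces WMaker to reach $u_1$ before round $n-4$. This undermines both your $m=1$ case (your assertion that WMaker reaches some $u_k\in\{u_1,u_2,u_3\}$ within a constant number of rounds of round~2 is unsupported, since at round~2 the set $U$ is almost all of $V(K_n)$ and many vertices can tie for maximum $d_B$) and your $m\ge 2$ case, which you leave as a hand-wave ``in the spirit of the proof of Claim~\ref{claim3}''.

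To repair the argument you would need to abandon the early-round analysis and instead pin down WBreaker's last few moves as the paper does, then carry out the local case analysis on the vertex from which WBreaker first entered $u_i$.
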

\begin{proof} It the assertion of the claim was true, it would mean that WBreaker spent three moves claiming edges in the following order: $u_iv_1$, $v_1u_j$ and $u_jv_1$, for some $i,j\in \{1,2,3 \},\linebreak  i \neq j$. This is not possible because of the following. Let  $i=1$ and $j=2$.\\
WBreaker came to vertex $u_1$ in round $n-6$ from some vertex $p$ which was on his path because WBreaker is a walker. 
\begin{enumerate}
\item[a)] If WMaker was at $p$ in that moment, then after WBreaker claimed $pu_1$ we can have $d_B(p,U) \leq 2$ (Corollary \ref{cc1}). Suppose $d_B(p, U)=2$. Let $pu_1,pu' \in E(B)$ for some $u'\in U$. This means that in her move in round $n-6$, WMaker must move to some $u\in U$.
After WBreaker claimed $u_1v_1$ in round $n-5$, edge $uu'$ was free and WMaker could claim it. Then, in round $n-4$ WBreaker claims $v_1u_2$ and WMaker according to the strategy $\cS$ must visit $u_1$ or $u_2$ from $u'$.
Since $d_B(u_1)>d_B(u_2)$ she will visit $u_1$. A contradiction. \\
Edges $pu$, $uu'$ and $u'u_1$ (or $u'u_2$) must be free in the moment when WMaker wants to claim them. Otherwise, this would be in contradiction to Lemma \ref{lema1}. Similarly, we can obtain a contradiction in case $d_B(p,U)= 1$ and WMaker is at $p$ after WBreaker's move $pu_1$.
\item[b)] Suppose that WMaker is at some vertex $w \neq p$ and $p,u_1 \notin V(M)$. Due to Lemma \ref{lema1}, WMaker must visit either $p$ or $u_1$ in her $(n-6)^{\mathrm{th}}$ move. Assume she visits $p$ along $wp$. Let $u' \in U$ be another vertex touched by WBreaker (because after each round there can be at most $2$ vertices in $U$ belonging to $V(B)$ - Lemma \ref{lema2}). 
After WBreaker claims $u_1v_1$ in round $n-5$, WMaker can claim $pu'$ and when WBreaker moves to $u_2$ along $v_1u_2$  WMaker visits $u_1$ from $u'$ in round $n-4$. A contradiction. \\
Note that edges $pu'$ and $u'u_1$ (or $u'u_2$) must be free in the moment when WMaker wants to claim them. Otherwise, we would have a contradiction to Lemma \ref{lema1}. 
\item[c)] Suppose that WMaker is at some vertex $w$  in the moment when WBreaker claimed $pu_1$ and let $p \in V(M)$. After WBreaker's move in round $n-6$ there can be at most two more vertices from $U$ (beside the vertex $u_1$), say $a$ and $b$, which belong to $V(B)$ (due to Lemma \ref{lema2}).  Then, WMaker can move from $w$ to $a$ or $b$ because $d_B(w,U)\leq 1$ (due to Corollary \ref{c1} and because WBreaker in his last move chose $pu_1$). Suppose that $wa$ is free and WMaker claims it. After WBreaker claims $u_1v_1$ in round $n-5$, WMaker chooses $ab$ (this edge must be free, otherwise we would have a contradiction to Lemma \ref{lema1} after round $n-7$). In round $n-4$ WBreaker claims $v_1u_2$ and WMaker is able to claim $bu_1$ (otherwise we have a contradiction to Lemma \ref{lema1}, again). A contradiction, because WMaker visited $x$ in round $n-4$. \\
Therefore, it is not possible that at the same time $u_iv_1, u_jv_1 \in E(B)$ for some $i,j\in\{1,2,3\}$, $i\neq j$.
\end{enumerate}
\end{proof}
\noindent Claim~\ref{claim4} implies that there can be at most one edge $v_1u_i \in E(B)$ for $i\in\{1,2,3\}$. Thus, suppose that edge $v_1u_2 \in E(B)$, that is, WBreaker came from $u_2$ to $v_1$. WMaker is at vertex $x$. We know that $d_B(x,U) \leq 1$ because of Corollary \ref{c1} and 
because in his last move WBreaker moved to $v_1$. Suppose the edge $xu_1$ is free. WMaker claims it. If edge $v_1u_1$ is free at the beginning of WMaker's $(n-2)^{\mathrm{nd}}$ move, then WMaker claims it and closes the cycle of length $n-2$. Otherwise, this means that WBreaker claimed this edge in his $(n-2)^{\mathrm{nd}}$ move (Claim \ref{claim4}). In this case, WMaker moves to $u_3$. The edge $u_1u_3$ must be free due to Lemma \ref{lema1} after round $n-4$. In round $n-1$ WBreaker is not able to prevent WMaker from claiming the edge $u_3v_1$ because he finished his previous move at vertex $u_1$. So, the cycle of length $n-1$ is created in WMaker's graph. \\
Therefore, WMaker is able to create a cycle of length $n-2$ or $n-1$.
\paragraph{Stage 3.} Depending on how Stage 2 ended we analyse two cases. 
\paragraph{Case 1.} Suppose that WMaker created a cycle $C$ of length $n-2$. She played exactly $n-2$ rounds. WMaker's current position is at vertex $v_1$. Denote by $v_{n-2}$ the vertex which was last visited by WMaker in round $n-3$. Let $U = \{u_1,u_2 \}$. In round $n-1$ WMaker returns from $v_1$ to vertex $v_{n-2}$.
\begin{figure}[h]

\centering
\includegraphics[width=0.8\textwidth]{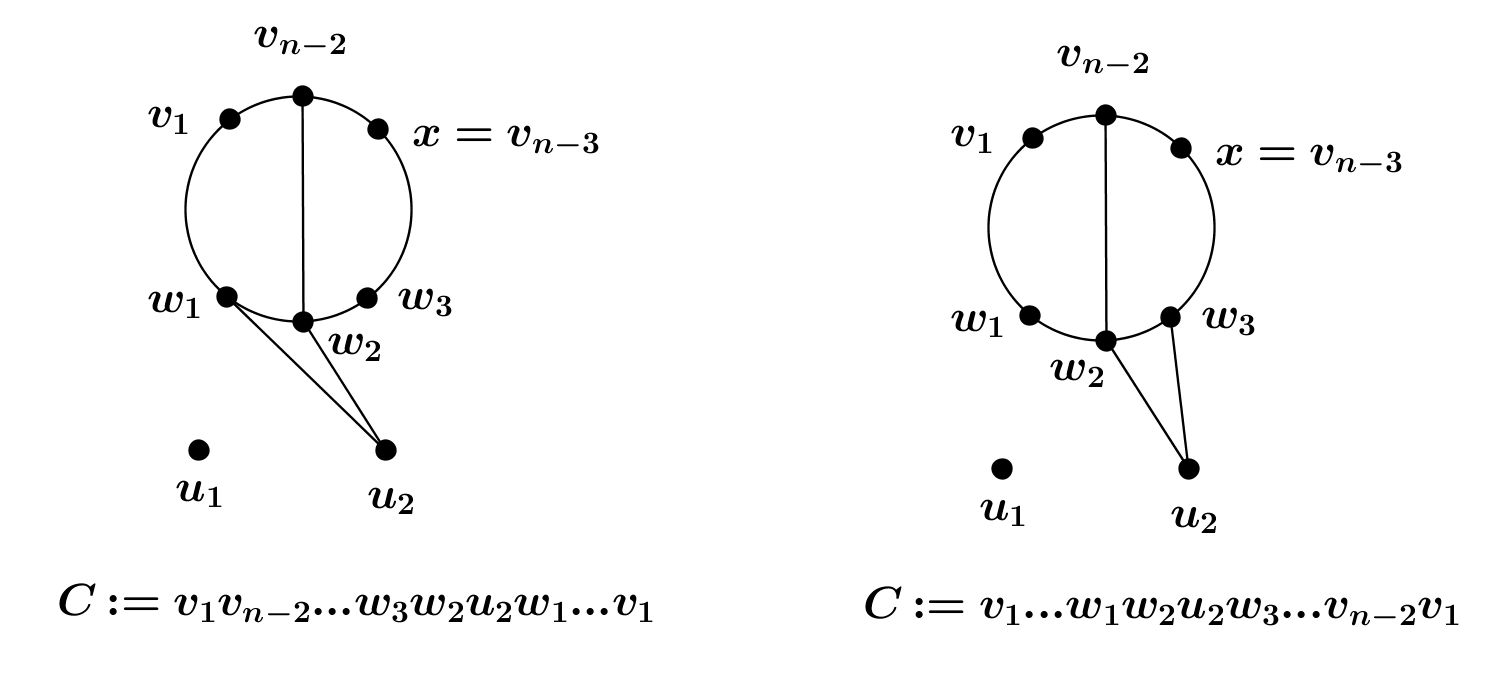}
\caption{WMaker's cycle $C$ of the length $n-1$ after round $n+2$.}
\label{fig2}
\small Left (right) figure illustrates the case when WMaker claimed $u_2w_1$ ($u_2w_3$) in round $n+2$.
\end{figure} 
Lemma~\ref{lema3} guarantees that $d_B(u_i)\leq 6$ for all $i\in \{1,2,3\}$ in round $n-3$ (in which WMaker visited $v_{n-2}$) and so in that moment $d_B(u_i,C) \leq 6$ for $i\in\{1,2\}$. It follows that, after WBreaker's move in round $n$, vertices $u_1,u_2,v_{n-2}$ can have degree at most $8$ in $B$ towards cycle $C$. Since $v(C)=n-2$, by pigeonhole principle, there are 3 consecutive vertices $w_1,w_2,w_3$ on cycle $C$ such that there are no WBreaker's edges between $\{w_1,w_2,w_3 \}$ and $\{ u_1,u_2,v_{n-2} \}$.
Since $n$ is large enough, there are at least $4$ such triples (with at least 6 vertices if triples are not disjoint). If one of these 6 vertices, say $t$, is such that $d_B(t)\geq n/3$, WMaker can pick another triple not containing such a vertex. There can be at most one vertex with degree at least $n/3$. Otherwise, this would mean that WBreaker played more than $n$ moves (as he is a walker), which is a contradiction. 
\\
WMaker first claims the edge $v_{n-2}w_2$ in round $n$. If in the following round, WBreaker claimed the edge $w_iu_1$ for some $i\in\{1,2,3\}$, then WMaker chooses edge $w_2u_2$ (otherwise, if he claimed $w_iu_2$, we just interchange the vertices $u_1$ and $u_2$) and in round $n+2$ closes the cycle $C$ of length $n-1$ by claiming edge $u_2w_1$ or $u_2w_3$. This is illustrated on Figure~\ref{fig2}. 
If WBreaker did not claim any of edges $w_iu_1, w_iu_2$, for $i\in\{1,2,3\}$, in round $n+1$, then WMaker moves from $w_2$ to either of these two vertices $u_1, u_2$. In round $n+2$, she moves from chosen vertex ($u_1$ or $u_2$) to $w_1$ or $w_2$, because WBreaker could not claim both edges $u_iw_1$ and $u_iw_2$ in round $n+2$, where $u_i$, $i\in\{1,2\}$, is the vertex which WMaker chose in the previous round. 
 \\
Let $U=\{u_1\}$. Suppose that WMaker finished her last move at $w_1$. Consider the following cases. 
\begin{enumerate}
\item[a)] WBreaker finished his $(n+3)^{\mathrm{rd}}$ move at vertex $u_1$. Then WMaker returns from $w_1$ to vertex $u_2$ which now belongs to $C$. Similarly as above we conclude that there are three vertices $y_1,y_2,y_3$ on $C$ such that there are no Breaker's edges between  $\{y_1,y_2,y_3 \}$ and $\{ u_1,u_2 \}$. WMaker in round $n+4$ claims $u_2y_2$ and then in rounds $n+5$ and $n+6$ claims $y_2u_1$, $u_1y_1$ (or $u_1y_3$), respectively. 
\item[b)]WBreaker finished his $(n+3)^{\mathrm{rd}}$ move at some vertex $v \neq u_1$. Then WMaker from $w_1$ finds three vertices $y_1,y_2,y_3$ such that there are no Breaker's edges between  $\{y_1,y_2,y_3 \}$ and $\{ w_1,u_1 \}$. She first claims $w_1y_2$ and then $y_2u_1$ and $u_1y_1$ (or $u_1y_3$), and completes the Hamilton cycle in round $n+5$. 
\end{enumerate} 
\paragraph{Case 2.} Suppose that in Stage $2$ WMaker created a cycle $C$ of length $n-1$ (in at most $n$ rounds). Denote by $v_{n-1}$ the vertex from $U$ that was last visited by WMaker. In the last round of Stage $2$, WMaker moved from $v_{n-1}$ to $v_1$. Let $U = \{u\}$. \\
WMaker first moves from $v_1$ to $v_{n-1}$ in round $k\leq n+1$ ($k=n+1$ if WMaker finished her cycle in round $n$ of previous stage). \\ 
If WBreaker finished his move in round $k+1\leq n+2$ at some vertex different from $u$, then WMaker finds three consecutive vertices $ y_1,y_2,y_3$ on $C$, such that there are no edges between $\{y_1,y_2,y_3 \}$ and $\{ u, v_{n-1}\}$ in $B$. Since $d_B(v_{n-1}), d_B(u) \leq 9$ in round $k+1 \leq n+2$ and since $v(C) = n-1$, by pigeonhole principle, such vertices $y_1,y_2,y_3$ exist. She first claims $v_{n-1}y_2$ and in the following round the edge $y_2u$. In the last round, $k+3\leq n+4$, she claims $uy_1$ or $uy_3$ because WBreaker could not claim both edges. 
\\ 
If WBreaker finished his move in round $k+1\leq n+2$ at vertex $u$, then WMaker moves from $v_{n-1}$ to $v_{n-2}$ in this round. In round $k+2 \leq n+3$ WBreaker must move from $u$. In this round WMaker finds three consecutive vertices $y_1,y_2,y_3$ on $C$, such that there are no edges between $\{y_1,y_2,y_3 \}$ and $\{ u, v_{n-2}\}$ in $B$.  
These vertices exist as WMaker visited vertex $v_{n-2}$ in round $n-3$ and in that moment we had $d_B(v_{n-2}), d_B(u) \leq 6$ (Lemma \ref{lema3}). Since $d_B(v_{n-2}), d_B(u) \leq 9$ in round $k+2 \leq n+3$ and since $v(C) = n-1$, by pigeonhole principle, such vertices $y_1,y_2,y_3$ exist. WMaker first claims the edge $v_{n-2}y_2$. In the following round WMaker claims $y_2u$ and in the final round, $k + 4 \leq n+5$, she completes the Hamilton cycle by claiming edge $uy_1$ or $uy_3$.
\end{proof}

\section{Proof of Theorem~\ref{breaker}}
\label{brP}
In this section we prove Theorem~\ref{breaker}, thus providing WBreaker's strategy in the Connectivity game. Here, we suppose that WMaker starts the game and $U=V(K_n)$.
\begin{proof}
WBreaker plays arbitrarily until $|U|=3$. To be able to visit $n-3$ vertices, WMaker needs to play at least $n-4$ moves. Let $u_1,u_2,u_3 \in U$ after round $k\geq n-4$. \\
If in round $k+1\geq n-3$ WMaker moves to some vertex $v\neq u_i$, $i\in\{1,2,3\}$, then she will need at least 3 more moves to visit $u_1,u_2,u_3$, which satisfies the claim. \\
Suppose that in round $k+1\geq n-3$ WMaker moves to some $u_i$, $i\in \{1,2,3\}$. WBreaker moves to $u_j$, $j\neq i$. WBreaker is able to move to $u_j$ since $u_j \in U$ and there is no WMaker's edge between WBreaker's current position and vertex $u_j$. \\
Without loss of generality, suppose that WMaker has moved to $u_1$ and WBreaker to $u_2$. If WMaker in round $k+2$ moves to one of $\{u_2,u_3\}$, WBreaker claims the edge $u_2u_3$. As $u_2u_3 \in E(B)$ from her current position, WMaker is not able to visit the remaining isolated vertex in her graph in round $k+3\geq n-1$, so she needs to make at least one additional move to touch the remaining vertex.
If WMaker moves to some vertex $v \neq u_i$, $i \in \{ 2,3 \}$ in round $k+2\geq n-2$, then she will need at least two more moves to visit $u_2,u_3$. \\
It follows that WMaker needs at least $n$ moves to win in the Connectivity game. 
\end{proof}

\section{Concluding remarks}
\label{sec::last}

Theorems~\ref{teorema1} and~\ref{breaker} imply that WMaker needs $t$, $n\leq t \leq n+1$ moves to make a spanning tree and Theorem~\ref{teorema2} gives that she needs at most $n+6$ moves to create a Hamilton cycle. Similar reasoning to the proof of Theorem~\ref{breaker} leads to the conclusion that for creating a Hamilton cycle, WMaker needs at least $n+1$ moves, as she cannot make a spanning tree in less than $n$ moves. 

Note that if we increase WBreaker's bias $b$ only by one, in the $(1:2)$ WMaker--WBreaker game, WMaker will not be able to visit all vertices of the graph.
This is because WBreaker can isolate a vertex in WMaker's graph. Indeed, if $b=2$, in each round he uses one move to return to the fixed vertex along previously claimed edge, and the other to claim the edge between this particular vertex and WMaker's current position.

\section*{Acknowledgements}
The research of the second author is partly supported by Ministry of Education, Science and Technological Development, Republic of Serbia, Grant No.\ 174019.

\appendix
\section{Appendix}
\begin{proof}[Proof of Claim~\ref{claim2}]
Suppose that $\exists i \in \{1,2,3 \}$ such that $v_1u_i \in E(B)$. Since WBreaker moved from $x$ to $u_1$ in round $n-4$, it follows that in round $n-5$, he came from some vertex $p$ to $x$. We know that $p \notin \{ u_2,u_3 \}$ following $\cS$ and due to Lemma~\ref{lema1}. Thus, it could happen that WBreaker claimed $u_2p$ (or $u_3p$) in round $n-6$ and  in round before that, he claimed $v_1u_2$ (or $v_1u_3$). Let $v_1u_2, u_2p \in E(B)$. This means that WBreaker returned to $v_1$ along some edge in round $n-8$. Since in this move WBreaker did not touch any new vertex from $U$, after his move in round $n-8$, there can be at most 2 vertices from $U$ visited by WBreaker (according to Lemma \ref{lema2}), say $t$ and $t_1$. In rounds $n-8$ and $n-7$, WMaker visits $t$ and $t_1$, respectively. (If WMaker cannot move from current position, say $y$, to $t$ or $t_1$ in round $n-8$, then Corollary \ref{cc1} implies that $d_B(y,U)=2$ and this would mean that WBreaker moved from $y$ to $t$ or $t_1$ in round $n-8$, which is not the case.) \\ \\
If WMaker is not able to claim $t_1u_2$ in round $n-6$ this means that WBreaker claimed $t_1u_2$ in this round (the edge could not appear earlier due of Lemma \ref{lema1}). In round $n-6$, WMaker moves from $t_1$ to some other vertex $u\in U$. In the following round, $n-5$, WBreaker will not be able to prevent WMaker from claiming $uu_2$ because he finished his $(n-6)^{\mathrm{th}}$ move in $t_1$. A contradiction. \\
Thus, after round $n-4$, all edges $u_1v_1, u_2v_1, u_3v_1$ are free. 
\end{proof}
\end{document}